 \titleformat{\section}[block]
  {\bfseries}
  {\thesection.}
  {.5em}
  {}
\setlist{noitemsep}
\newtheorem{theorem}{Theorem}[section]
\newtheorem{proposition}[theorem]{Proposition}
\newtheorem{corollary}[theorem]{Corollary}
\theoremstyle{definition}
\newtheorem{remark}[theorem]{Remark}
\DeclareMathOperator{\uhr}{\upharpoonright} 
\DeclareMathOperator\sto{\leadsto}
\DeclareMathOperator\card{Card}
\DeclareMathOperator\diam{diam}
\newcommand{\lembed}[1]{\lhook\joinrel\xrightarrow{#1}}
\renewcommand{\emptyset}{\varnothing}
\numberwithin{equation}{section}
\begin{document}

\author{Valentin Gutev} \address{Department of Mathematics, Faculty of
  Science, University of Malta, Msida MSD 2080, Malta}
\email{valentin.gutev@um.edu.mt}

\subjclass[2010]{54C60, 54C65, 54D20, 54F45, 55M10, 55U10}

\keywords{Hausdorff continuous mapping, approximate selection,
  $C$-space, finite $C$-space, $UV^*$-like set, oriented simplicial
  complex, nerve.}

\title{Near-selection theorems for $C$-spaces}
\begin{abstract}
  Haver's near-selection theorem deals with approximate selections of
  Hausdorff continuous CE-valued mappings defined on $\sigma$-compact
  metrizable $C$-spaces. In the present paper, we extend this theorem
  to all paracompact $C$-spaces. The technique developed to achieve
  this generalisation is based on oriented simplicial complexes. This
  approach makes not only a considerable simplification in the
  proof but is also successful in generalising the special case of
  compact metric $C$-spaces to all paracompact finite $C$-spaces.
\end{abstract}

\date{\today}
\maketitle

\section{Introduction}

A space $X$ has property $C$, or is a \emph{$C$-space}, if for any
sequence $\{\mathscr{U} _n:n<\omega\}$ of open covers of $X$ there
exists a sequence $\{\mathscr{V} _n:n<\omega\}$ of pairwise disjoint
families of open subsets of $X$ such that each $\mathscr{V} _n$
refines $\mathscr{U} _n$ and $\bigcup_{n<\omega}\mathscr{V} _n$ is a
cover of $X$. The $C$-space property was originally defined by W.\
Haver~\cite{haver:1974} for compact metric spaces, subsequently Addis
and Gresham~\cite{addis-gresham:78} reformulated Haver's definition
for arbitrary spaces.  It should be remarked that a $C$-space is
paracompact if and only if it is countably paracompact and normal, see
e.g.\ \cite[Proposition 1.3]{MR2352366}. Every finite-dimensional
paracompact space, as well as every countable-dimensional metrizable
space, is a $C$-space~\cite{addis-gresham:78}, but there exists a
compact metric $C$-space which is not
countable-dimensional~\cite{pol:81}.\medskip

For spaces (sets) $X$ and $Y$, we write $\varphi:X\sto Y$ to designate
that $\varphi$ is a \emph{set-valued} (or \emph{multi-valued})
\emph{mapping} from $X$ to the nonempty subsets of $Y$.  For a metric
space $(Y,\rho)$ and $\varepsilon>0$, we will use
$\mathbf{O}_\varepsilon(y)$ for the open $\varepsilon$-ball centred at
a point $y\in Y$; and
$\mathbf{O}_\varepsilon(S)=\bigcup_{q\in S}\mathbf{O}_\varepsilon(q)$,
whenever $S\subset Y$. In these terms, a set-valued mapping
$\varphi:X\sto Y$ is \emph{$\rho$-continuous} (or \emph{Hausdorff
  continuous}) if for every $\varepsilon >0$, each point $x\in X$ has
a neighbourhood $V$ such that
\begin{equation}
  \label{eq:Finite-C-v2:3}
  \varphi(x)\subset \mathbf{O}_\varepsilon(\varphi(p))\ \text{and}\ 
  \varphi(p) \subset \mathbf{O}_\varepsilon(\varphi(x)),\quad
  \text{for every $p\in V$.}
\end{equation}
A map $f:X\to Y$ is an \emph{$\varepsilon$-selection} for a mapping
$\varphi :X\sto Y$ if
$f(x)\in \mathbf{O}_\varepsilon\left(\varphi(x)\right)$, for every
$x\in X$. More generally, for a function
${\varepsilon:X\to (0,+\infty)}$, we say that $f:X\to Y$ is an
\emph{$\varepsilon$-selection} for $\varphi$ if
$f(x)\in \mathbf{O}_{\varepsilon(x)}\left(\varphi(x)\right)$, for
every $x\in X$. Finally, following Haver~\cite{MR503224}, a subset
$S\subset Y$ is called \emph{CE} if for every $\varepsilon>0$ there
exists $\delta>0$ such that $\mathbf{O}_\delta(S)$ is contractible in
$\mathbf{O}_\varepsilon(S)$. The following theorem was proved by Haver
in \cite[Theorem 1]{MR503224}.

\begin{theorem}[\cite{MR503224}]
  \label{theorem-Selections-IDS:1}
  Let $(Y,\rho)$ be a metric space and $X=\bigcup_{n<\omega} X_n$ be a
  metrizable space with each $X_n$ being a compact $C$-space. Then
  each $\rho$-continuous CE-valued mapping $\varphi:X\sto Y$ has a
  continuous $\varepsilon$-selection, for every continuous function
  $\varepsilon:X\to (0,1]$.
\end{theorem}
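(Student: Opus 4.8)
The plan is to realise the $\varepsilon$-selection as a composition $f=g\circ\kappa$, where $\kappa\colon X\to |N|$ is the canonical barycentric map into the geometric nerve $|N|$ of a suitably chosen open cover $\mathscr{W}$ of $X$, and $g\colon|N|\to Y$ is a map built by hand, one simplex at a time, so that $g$ sends each vertex of $N$ to a point of $Y$ lying near the corresponding values of $\varphi$ and sends each simplex into a controlled $\rho$-neighbourhood of those values. Two features of the data drive the construction. First, $\rho$-continuity of $\varphi$ lets us pick $\mathscr{W}$ so fine that over any member — indeed over any finite subfamily with nonempty intersection — the values of $\varphi$ differ by an arbitrarily small amount in the Hausdorff sense; this is what allows the vertex-values attached to a common simplex to be compared. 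Second, the CE property is exactly the extension device needed to propagate a definition of $g$ from the boundary of a simplex across the simplex itself.

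So the first step would be a \emph{filling lemma}: if $S\subset Y$ is CE and $\varepsilon>0$, then there is $\delta>0$ such that every continuous map $\partial\Delta^{k}\to\mathbf{O}_\delta(S)$ extends to a map $\Delta^{k}\to\mathbf{O}_\varepsilon(S)$. This is immediate from the definition of CE, since contractibility of $\mathbf{O}_\delta(S)$ in $\mathbf{O}_\varepsilon(S)$ makes every sphere in $\mathbf{O}_\delta(S)$ null-homotopic, hence extendable over the disc, inside $\mathbf{O}_\varepsilon(S)$. Because each $X_n$ is compact and $\varphi$ is $\rho$-continuous, a routine compactness argument upgrades this to a \emph{uniform} statement on each $X_n$: for a prescribed target radius there is a single $\delta$ that fills simplices against $\mathbf{O}_{(\cdot)}(\varphi(x))$ simultaneously for all $x$ in a given compact piece. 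This uniformity is the sole place where compactness of the $X_n$ is essential, and it is what permits honest parameter bookkeeping later.

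Next I would invoke the $C$-space property to control the combinatorics of the nerve. Feeding it a sequence of covers refining the fine cover above yields pairwise disjoint open families $\mathscr{V}_0,\mathscr{V}_1,\dots$ whose union $\mathscr{W}=\bigcup_{n}\mathscr{V}_n$ still covers $X$. The decisive point is that, since each $\mathscr{V}_n$ is pairwise disjoint, any simplex of $N=N(\mathscr{W})$ meets each $\mathscr{V}_n$ in at most one vertex; thus $N$ carries a natural grading by the family index, and on any finite band $\mathscr{V}_0\cup\dots\cup\mathscr{V}_m$ its dimension is at most $m$. One then defines $g$ on $N$ by an induction that runs simultaneously over this band index and over the dimension of simplices, choosing for each vertex $V\in\mathscr{V}_n$ a value $y_V$ near $\varphi$ on $V$ and extending across each simplex by the filling lemma. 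Here the \emph{orientation} of $N$ enters: fixing a linear order on the vertices makes the iterated extensions canonical, removes the ambiguity in which face to fill from first, and keeps the construction coherent across overlapping simplices. With the radii chosen in a decreasing chain — the target radius at dimension $k$ serving as the CE-datum $\delta$ for the fillings at dimension $k+1$, and the whole chain bounded by the values of $\varepsilon$ — the image of each simplex stays inside the intended neighbourhood of the relevant values of $\varphi$.

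Finally, setting $f=g\circ\kappa$ and using that $\kappa(x)$ lies in the simplex spanned by those members of $\mathscr{W}$ containing $x$, the estimates along the chain give $f(x)\in\mathbf{O}_{\varepsilon(x)}(\varphi(x))$ once $\rho$-continuity is used to transfer the neighbourhoods of the nearby reference values back to $\varphi(x)$ itself; continuity of $f$ is inherited from continuity of $\kappa$ and of the piecewise definition of $g$. The merely $\sigma$-compact case is handled by running this over the increasing compacta $X_0\subset X_0\cup X_1\subset\cdots$, so that only finitely many families are relevant near each point at each stage, and by keeping the successive parameter chains summable so that the constructions agree in the limit and produce a single continuous $f$. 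I expect the \textbf{main obstacle} to be precisely this bookkeeping: organising the two interlocking inductions — across the dimension of simplices and across the countably many disjoint families — with one nested chain of CE-radii that at once keeps every simplex image within the prescribed $\varepsilon$-neighbourhoods and forces the per-stage maps to converge uniformly. The uniform CE constants furnished by compactness of the $X_n$ are what make this chain realisable.
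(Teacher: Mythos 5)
Your overall architecture --- refine by the $C$-space property into pairwise disjoint families $\mathscr{V}_n$, orient the nerve by the family index, build a map on the nerve vertex-first, and compose with the canonical map --- is exactly the skeleton of the paper's argument (the paper actually deduces this theorem from a stronger result for all paracompact $C$-spaces, noting that $X$ here is such a space). Your filling lemma is also correct as stated. The genuine gap is in the extension mechanism. You propose an upward skeleton induction in which the target radius at dimension $k$ serves as the CE-datum $\delta$ for the fillings at dimension $k+1$. Since the CE definition forces $\delta\le\varepsilon$, this chain of radii is \emph{increasing} in the dimension, so it can only be initialised by working backwards from a known top dimension. But the nerve of $\mathscr{V}=\bigcup_{n<\omega}\mathscr{V}_n$ is infinite-dimensional in general (a simplex may pick up one vertex from each of arbitrarily many bands), so there is no top dimension to start from, and the forward chain cannot be kept below $\varepsilon(x)$. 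The variant in which the target for a simplex $\sigma$ is tied to its minimal vertex fails for a different reason: when you fill a simplex $\tau$ with minimal vertex $v_0$, the faces of $\tau$ that \emph{contain} $v_0$ have already been filled into the large set attached to $v_0$, so the boundary of $\tau$ no longer lies in the small set from which the CE property lets you contract --- the expansion budget at $v_0$ gets spent once per dimension instead of once. Your proposed remedy (summability over the increasing compacta $X_0\subset X_0\cup X_1\subset\cdots$) does not address this, because the obstruction is a single high-dimensional simplex, not the convergence of a sequence of maps.

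The missing idea is to decouple the radii from the dimension and attach them to the band index, decreasing as $n$ grows, with the covers $\mathscr{U}_n$ chosen so rapidly finer that the \emph{large} neighbourhood used at band $k$ is contained in the \emph{small} neighbourhood used at any band $n<k$ whose member it meets (this is Theorem~\ref{theorem-Near-sel-v1:1}\ref{item:Near-sel-v3:2}). One then builds the map on each simplex from its \emph{top} vertex downward: fix, once and for all, a contraction $H_v$ of the small neighbourhood at $v$ into the large one, and on a simplex $v_0<\dots<v_{m}$ apply the contractions in the order $H_{v_{m-1}},\dots,H_{v_0}$ via iterated cones (Proposition~\ref{proposition-Near-sel-v8:1}). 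Each contraction is applied exactly once per simplex, its input already lies in the small set at that vertex by the nesting above, and the whole image lands in the neighbourhood attached to $\min\sigma$ --- a bound independent of $\dim\sigma$. Coherence on overlapping simplices, which a skeleton induction gives you for free, is recovered here from the uniqueness of the per-simplex map once the contractions $H_v$ are fixed. As a side remark, compactness of the $X_n$ is not where the uniformity comes from in the paper: $\rho$-continuity plus paracompactness already yields the needed locally uniform CE constants (Propositions~\ref{proposition-Finite-C-v4:1} and \ref{proposition-Finite-C-v4:3}), which is why the result extends beyond the $\sigma$-compact case.
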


By a \emph{simplicial complex} we mean a collection $\Sigma$ of
nonempty finite subsets of a set $S$ such that $\tau\in \Sigma$,
whenever $\emptyset\neq \tau\subset \sigma\in \Sigma$. The set
$\bigcup\Sigma $ is the \emph{vertex set} of $\Sigma$, while each
element of $\Sigma$ is called a \emph{simplex}. The
\emph{$k$-skeleton} $\Sigma^k$ of $\Sigma$ ($k\geq 0$) is the
simplicial complex
$\Sigma^{k}=\{\sigma\in \Sigma:\card(\sigma)\leq k+1\}$, where
$\card(\sigma)$ is the cardinality of $\sigma$. In the sequel, for
simplicity, we will identify the vertex set of $\Sigma$ with its
$0$-skeleton $\Sigma^0$.\medskip

The vertex set $\Sigma^0$ of a simplicial complex $\Sigma$ can be
embedded as a linearly independent subset of some linear normed
space. Then to any simplex $\sigma\in \Sigma$ we may associate the
\emph{geometric simplex} $|\sigma|$ which is the convex hull of
$\sigma$. The \emph{geometric realisation} of $\Sigma$ is the set
$|\Sigma|=\bigcup\{|\sigma|:\sigma\in \Sigma\}$.  As a topological
space, we will always consider $|\Sigma|$ endowed with the
\emph{Whitehead topology}~\cite{MR1576810,MR0030759}. This is the
topology in which a subset $U\subset |\Sigma|$ is open if and only if
$U\cap |\sigma|$ is open in $|\sigma|$, for every $\sigma\in \Sigma$.
The following special case of Theorem~\ref{theorem-Selections-IDS:1}
was obtained by Haver in \cite[Proposition 3]{haver:1974}.

\begin{theorem}[\cite{haver:1974}]
  \label{theorem-Selections-IDS-v4:1}
  Let $X$ be a compact metric $C$-space, $(Y,\rho)$ be a metric space
  and $\varphi:X\sto Y$ be a $\rho$-continuous mapping such that for
  every $x\in X$ and $\varepsilon>0$, there exists $\delta>0$ with the
  property that for every finite simplicial complex $\Sigma$, every
  continuous map $g:|\Sigma|\to \mathbf{O}_\delta(\varphi(x))$ is
  null-homotopic in $\mathbf{O}_\varepsilon(\varphi(x))$. Then
  $\varphi$ has a continuous $\varepsilon$-selection, for every
  $\varepsilon>0$.
\end{theorem}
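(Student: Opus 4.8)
The plan is to reduce the construction of the $\varepsilon$-selection to an extension problem over the geometric realisation of a finite-dimensional nerve, and then to solve that problem by induction over skeleta, using the null-homotopy hypothesis to fill in one simplex at a time. Fix $\varepsilon>0$. First I would choose in advance a rapidly decreasing sequence $a_0>a_1>\cdots>0$ of \emph{mesh} radii, together with an allocation of the budget $\varepsilon$ across dimensions, small enough that the inflation of radii incurred at each filling step (as quantified by the hypothesis) accumulates to less than $\varepsilon$. Using Hausdorff continuity of $\varphi$, for each $n$ I would take an open cover $\mathscr{U}_n$ of $X$ each of whose members $U$ carries a point $x_U$ with $\varphi(p)\subset \mathbf{O}_{a_n}(\varphi(x_U))$ and $\varphi(x_U)\subset \mathbf{O}_{a_n}(\varphi(p))$ for every $p\in U$, so that $\varphi$ has Hausdorff oscillation at most $a_n$ on each member of $\mathscr{U}_n$.

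Next I would invoke the $C$-space property for the sequence $\{\mathscr{U}_n:n<\omega\}$ to obtain pairwise disjoint open families $\mathscr{V}_n$ refining $\mathscr{U}_n$ with $\bigcup_{n<\omega}\mathscr{V}_n$ a cover of $X$; by compactness finitely many members suffice, so I may assume $\mathscr{W}=\bigcup_{n<k}\mathscr{V}_n$ is a finite cover in which each $\mathscr{V}_n$ is disjoint. Because every point lies in at most one member of each $\mathscr{V}_n$, the order of $\mathscr{W}$ is at most $k$, so the nerve $N=N(\mathscr{W})$ is a finite complex of dimension $<k$; moreover the vertices of any simplex have pairwise distinct levels $n(W)$, which equips $N$ with a canonical vertex ordering. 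A partition of unity subordinate to $\mathscr{W}$ then yields a continuous canonical map $\kappa:X\to |N|$ with the property that $\kappa(x)$ lies in the simplex spanned by exactly those $W\in\mathscr{W}$ containing $x$.

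I would then construct $h:|N|\to Y$ by induction over the skeleta $N^0\subset N^1\subset\cdots\subset N^{k-1}=N$. On vertices I set $h(W)=y_W$ for a chosen point $y_W\in\varphi(x_W)$. Assuming $h$ is defined on $N^j$ with each simplex mapped into a controlled ball about some $\varphi(x_W)$, I extend over each $(j+1)$-simplex $\sigma$ as follows: the restriction of $h$ to the boundary $\partial|\sigma|$ is a continuous map of the finite complex $\partial\sigma$ into a small ball $\mathbf{O}_{\delta}(\varphi(x_W))$ about the value at the lowest-level vertex of $\sigma$; by hypothesis this map is null-homotopic in $\mathbf{O}_{\eta}(\varphi(x_W))$, and a null-homotopy of a map on $\partial|\sigma|\cong S^{j}$ extends it across the filled simplex $|\sigma|\cong D^{j+1}$ with image in $\mathbf{O}_{\eta}(\varphi(x_W))$. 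Since $\mathscr{W}$ is finite and $|N|$ carries the Whitehead topology, the resulting $h$ is continuous, and I would set $f=h\circ\kappa$. For $x\in X$, $\kappa(x)$ lies in a simplex all of whose vertices $W$ contain $x$; tracing the ball estimates and using that $\varphi(x)$ is $a_n$-close to $\varphi(x_W)$ for the relevant $W$, one checks that $f(x)\in\mathbf{O}_\varepsilon(\varphi(x))$, so $f$ is the desired continuous $\varepsilon$-selection.

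The main obstacle I anticipate is the radius bookkeeping in the inductive filling step. The hypothesis provides, for each point $x$, a $\delta$ making maps into $\mathbf{O}_\delta(\varphi(x))$ null-homotopic in $\mathbf{O}_\varepsilon(\varphi(x))$, but $\delta$ depends on both $x$ and the target radius, and the number $k$ of skeleta---hence the number of successive fillings---is not known until after the cover has been built. The delicate part is therefore to fix the mesh sequence $\{a_n\}$ and a nested family of radii for the induction a priori, small enough that, at each filling, the boundary indeed lands in a ball to which the hypothesis applies while the total inflation stays below $\varepsilon$. Compactness of $X$ is what lets me pass from the pointwise, $x$-dependent $\delta$'s to the finitely many reference points $x_W$, and hence to uniform radius control; anchoring each simplex at its lowest-level vertex is what keeps the reference balls coherent along common faces, so that the extensions produced over adjacent simplices agree on their overlaps.
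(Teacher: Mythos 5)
Your overall architecture --- covers adapted to the Hausdorff oscillation of $\varphi$, a $C$-space refinement $\mathscr{W}=\bigcup_{n<k}\mathscr{V}_n$, the nerve oriented by the level of each member, a map on the nerve anchored at the minimal vertex of each simplex, composition with a canonical map --- is exactly the paper's. The gap is in the engine that produces $h$ on the nerve: the skeleton-by-skeleton induction does not close. When you pass from a $j$-face $\tau$ of a $(j+1)$-simplex $\sigma'$ to $\sigma'$ itself, the two can (and typically do) share the same minimal vertex $W_0$; your inductive hypothesis places $h(|\tau|)$ in the \emph{output} ball $\mathbf{O}_{\eta}(\varphi(x_{W_0}))$ of the previous filling, whereas the next application of the null-homotopy hypothesis at the same anchor $x_{W_0}$ requires the boundary of $\sigma'$ to lie in the \emph{input} ball $\mathbf{O}_{\delta}(\varphi(x_{W_0}))$, where in general $\delta\ll\eta$. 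Consequently the radii must form a chain $r_j\le\delta(x_{W_0},r_{j+1})$ for $j<k-1$ with $r_{k-1}\le\varepsilon$, of length equal to the dimension of the nerve, and such a chain can only be built from the top down, i.e.\ after $k$ is known. But $k$ is produced by the $C$-space property only \emph{after} the covers $\mathscr{U}_n$ --- and hence the mesh radii $a_n$ --- have been fixed, so the ``a priori'' allocation you propose cannot be made. (Note also that the inflations do not ``accumulate'' additively: they compose as nested applications of the modulus $\delta(\cdot)$, which is exactly why the unwinding must start at the top.)

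The paper avoids this by a different induction and a stronger use of the hypothesis. Order the vertices by level and peel the nerve from the finest level downwards: for each vertex $v$ at level $n$, the simplices having $v$ as their minimal vertex form a cone $\Omega_v*v$ over the subcomplex $\Omega_v$ of simplices spanned by higher-level vertices; once $h$ is defined on $|\Omega_v|$, its image already lies in the input ball $\Phi(v)$ by the level-indexed containment $\Psi(W)\subset\Phi(v)$ for $W>v$, and a \emph{single} application of the hypothesis --- to the arbitrary finite complex $\Omega_v$, not merely to the sphere $\partial\sigma$ --- extends $h$ over the whole cone at once into $\Psi(v)$, since a null-homotopy of a map on $|\Omega_v|$ is precisely an extension over $|\Omega_v|*v$. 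Each anchor is thus used exactly once, the radius recursion is indexed by level rather than by dimension ($\varepsilon_{n+1}\le\delta_n/3$, with the oscillation of $\varphi$ on each $U\in\mathscr{U}_n$ below $\delta_n(U)/2$), and it can be set up before $k$ is known; this is the content of Theorem~\ref{theorem-Near-sel-v1:1} together with Theorem~\ref{theorem-Near-sel-v9:1}. Your restriction of the null-homotopy hypothesis to boundary spheres discards exactly the strength needed for this one-shot cone filling.
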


In the present paper, we show that
Theorem~\ref{theorem-Selections-IDS:1} is valid for all paracompact
$C$-spaces, while Theorem~\ref{theorem-Selections-IDS-v4:1} --- for
all paracompact finite $C$-spaces.  In fact, both these
generalisations are obtained with an almost identical proof. Namely,
in the next section, we consider an abstract embedding relation
$S\lembed{*}T$ of subsets $S\subset T\subset Y$ of a space $Y$. The
only requirement we place on this relation is to be \emph{monotone} in
the sense that $A\subset S\lembed{*} T\subset B$ implies
$A\lembed{*} B$.  In this general setting, for a metric space
$(Y,\rho)$, we consider \emph{uniformly $UV^{*}$} subsets defined by
``$\mathbf{O}_\delta(S)\lembed{*} \mathbf{O}_\varepsilon(S)$'', i.e.\
precisely as the properties of the values of $\varphi$ in
Theorems~\ref{theorem-Selections-IDS:1} and
\ref{theorem-Selections-IDS-v4:1}, but now using the relation
$\lembed{*}$. Next, we show that any $\rho$-continuous mapping
$\varphi:X\sto Y$ with uniformly $UV^*$-values, admits a special
sequences $\{\mathscr{U}_n:n<\omega\}$ of open covers of $X$
approximating the values of $\varphi$, see
Theorem~\ref{theorem-Near-sel-v1:1}. This is essentially the only
place where $\rho$-continuity of $\varphi$ is
used. Section~\ref{sec:orient-simpl-compl} contains two constructions
of continuous maps from the geometric realisation of oriented
simplicial complexes, see Theorems~\ref{theorem-Near-sel-v7:1} and
\ref{theorem-Near-sel-v9:1}. Finally, in
Section~\ref{sec:appr-select-c}, we apply
Theorem~\ref{theorem-Near-sel-v7:1} to nerves of covers to obtain with
ease our generalisation of Theorem~\ref{theorem-Selections-IDS:1} (see
Theorem~\ref{theorem-Finite-C-v6:1}). The same approach, but now using
Theorem~\ref{theorem-Near-sel-v9:1}, gives the generalisation of
Theorem~\ref{theorem-Selections-IDS-v4:1} (see
Theorem~\ref{theorem-Finite-C-v5:1}). The last section of the paper
contains two applications of these theorems dealing with
near-selections for Hausdorff continuous mappings whose values are
absolute retracts, see Corollaries~\ref{corollary-Near-sel-vgg:1} and
\ref{corollary-Near-sel-vgg:2}.

\section{Monotone Embedding Relations} 
\label{sec:monot-embedd-relat}

Let $S\lembed{*}T$ be an embedding relation defined on subsets
$S,T\subset Y$ of a space $Y$. In this, we will always assume that
$S\subset T$, so that the relation $\lembed{*}$ places some extra
properties on the inclusion $S\subset T$. Such a relation will be
called \emph{monotone} if $A\subset S\lembed{*} T\subset B$ implies
$A\lembed{*} B$. Various examples of monotone embedding relations will
be given in the next sections. In fact, several $UV$-properties can be
expressed in these terms by saying that a subset $S\subset Y$ is
$UV^*$ if every neighbourhood $U$ of $S$ contains a neighbourhood $V$
of $S$ such that $V\lembed{*} U$.\medskip

In what follows, all covers are assumed to be of \emph{nonempty}
sets. Also, in all statements in this section, $(Y,\rho)$ is a fixed
metric space and $\lembed{*}$ is a monotone embedding relation on the
subsets of $Y$. In this general setting, we shall say that a subset
$S\subset Y$ is \emph{uniformly $UV^*$} if for every $\varepsilon>0$
there exists $\delta=\delta(\varepsilon)>0$ such that
$\mathbf{O}_\delta(S)\lembed{*} \mathbf{O}_\varepsilon(S)$. Clearly, we
may always pick $\delta$ such that
$\delta(\varepsilon)\leq \varepsilon$, which is in good accord with
the requirement that
$\mathbf{O}_\delta(S)\subset \mathbf{O}_\varepsilon(S)$. \medskip

The main goal of this section is the following approximative
representation of $\rho$-continuous mappings with uniformly
$UV^*$-values.

\begin{theorem}
  \label{theorem-Near-sel-v1:1}
  Let $X$ be a paracompact space and $\varphi:X\sto Y$ be a
  $\rho$-continuous mapping such that each $\varphi(x)$, $x\in X$, is
  uniformly $UV^*$. Then for every continuous function
  $\varepsilon:X\to (0,+\infty)$ there exists a sequence
  $\mathscr{U}_n$, $n<\omega$, of open covers of $X$ and mappings
  $\Phi_n,\Psi_n:\mathscr{U}_n\sto Y$ such that for every
  $U\in \mathscr{U}_n$,
  \begin{enumerate}[itemsep=.5ex,label=\upshape{(\roman*)}]
  \item\label{item:Near-sel-v3:1}
    $\Phi_n(U)\lembed{*}\Psi_n(U)\subset
    \mathbf{O}_{\varepsilon(p)}(\varphi(p))$, whenever
    $p\in U$\textup{;}
  \item\label{item:Near-sel-v3:2} $\Psi_k(V)\subset \Phi_n(U)$,
    whenever $k>n$ and $V\in \mathscr{U}_k$ with
    $V\cap U\neq \emptyset$.
  \end{enumerate}
\end{theorem}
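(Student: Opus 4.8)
My plan is to realise each $\Phi_n(U)$ and $\Psi_n(U)$ as concentric balls about a tagged centre. Concretely, I construct inductively locally finite open covers $\mathscr{U}_n$ and, for each $U\in\mathscr{U}_n$, a point $x_U\in U$ together with radii $0<a_U\le b_U$, and then set $\Phi_n(U)=\mathbf{O}_{a_U}(\varphi(x_U))$ and $\Psi_n(U)=\mathbf{O}_{b_U}(\varphi(x_U))$. The three hypotheses enter separately. The uniformly $UV^*$ property of $\varphi(x_U)$ is used to pick $a_U$ (once $b_U$ is fixed) so that $\mathbf{O}_{a_U}(\varphi(x_U))\lembed{*}\mathbf{O}_{b_U}(\varphi(x_U))$, which is the embedding half of~\ref{item:Near-sel-v3:1}. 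The $\rho$-continuity of $\varphi$ is used to shrink $U$ so that $\varphi$ varies little across it: there is $s_U>0$ with $\varphi(p)\subset\mathbf{O}_{s_U}(\varphi(x_U))$ and $\varphi(x_U)\subset\mathbf{O}_{s_U}(\varphi(p))$ for all $p\in U$. The second of these gives $\Psi_n(U)\subset\mathbf{O}_{b_U+s_U}(\varphi(p))$, so the inclusion in~\ref{item:Near-sel-v3:1} follows as soon as $b_U+s_U\le\varepsilon(p)$ on $U$, an open condition arranged using continuity of $\varepsilon$.

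For~\ref{item:Near-sel-v3:2}, let $V\in\mathscr{U}_k$ with $k>n$ meet $U\in\mathscr{U}_n$ at a point $q$. Applying the variation estimate of $V$ at $q$ and then that of $U$ at $q$ yields $\varphi(x_V)\subset\mathbf{O}_{s_V+s_U}(\varphi(x_U))$, hence $\Psi_k(V)=\mathbf{O}_{b_V}(\varphi(x_V))\subset\mathbf{O}_{b_V+s_V+s_U}(\varphi(x_U))$. Thus~\ref{item:Near-sel-v3:2} reduces to the single numerical requirement $b_V+s_V+s_U\le a_U$, together with $s_U\le\tfrac13 a_U$, which I build into the choice at every level by shrinking $U$ after $a_U$ has been fixed. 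The difficulty is that $a_U$ is delivered by the $UV^*$ property and can be arbitrarily small, while $U$ may meet infinitely many $V$ at each deeper level; there is no global positive lower bound for the numbers $a_U$, so the smallness of the level-$k$ data must be governed position by position.

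To that end I carry a decreasing sequence of positive continuous control functions. The crux is to encode the level-$n$ radii into a function of the correct semicontinuity. I set $\beta_n(z)=\min\{a_U:U\in\mathscr{U}_n,\ z\in\overline{U}\}$; this minimum is over finitely many sets by local finiteness and is positive, and --- this is the delicate point --- using closures makes $\beta_n$ \emph{lower} semicontinuous, since $\{z:\beta_n(z)\le c\}=\bigcup\{\overline{U}:a_U\le c\}$ is a locally finite union of closed sets and hence closed. (Had I used the open sets $U$ in place of $\overline{U}$, the resulting function would be merely upper semicontinuous, and a positive upper semicontinuous function need not have a positive continuous minorant.) Since $X$ is paracompact, hence normal and countably paracompact, the positive lower semicontinuous $\beta_n$ admits a positive continuous minorant; intersecting successive minorants I obtain continuous $h_n$ with $0<h_n\le\beta_n$, $h_n\le h_{n-1}$ and $h_n\le 2^{-n}$.

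With the $h_n$ in hand the inductive step is routine. To build $\mathscr{U}_k$ I choose, for each $z\in X$, an open neighbourhood $O_z$ of $z$ on which all the smallness requirements hold with centre $z$: I set $b_V=\tfrac16\min\{h_{k-1}(z),\varepsilon(z)\}$, take $a_V\le b_V$ from the $UV^*$ property, put $s_V=\tfrac13 a_V$ and use $\rho$-continuity to make $\varphi$ vary by at most $s_V$ over $O_z$, and finally shrink $O_z$ so that $h_{k-1}>\tfrac12 h_{k-1}(z)$ and $\varepsilon(p)\ge b_V+s_V$ there; a locally finite open refinement $\mathscr{U}_k$ of $\{O_z:z\in X\}$ (paracompactness), with each $V$ tagged by a centre $x_V=z$ for which $V\subset O_z$, completes the step. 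Then for $q\in V\cap U$ as above one has $h_{k-1}(x_V)<2\,h_{k-1}(q)\le 2\,h_n(q)\le 2\,a_U$ (using $q\in V\subset O_{x_V}$, then $h_{k-1}\le h_n$ for $k-1\ge n$, then $q\in\overline{U}$), whence $b_V,s_V<\tfrac13 a_U$ and the required inequality $b_V+s_V+s_U<a_U$ follows. Because the $h_n$ are decreasing, controlling the consecutive step controls all $k>n$ at once. I expect the entire weight of the proof to rest on this scale-coordination --- manufacturing, from the scale-free pointwise output of the $UV^*$ hypothesis, a single globally consistent and positive continuous gauge at each level --- with the closure-based lower semicontinuous $\beta_n$ being the device that makes it possible; everything else reduces to triangle-inequality bookkeeping.
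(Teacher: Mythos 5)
Your proof is correct, and although it shares the paper's overall architecture --- inductively built locally finite covers, with $\Phi_n(U)$ and $\Psi_n(U)$ realised as concentric balls about $\varphi$ of a tagged centre, the uniform $UV^*$ property fixing the inner radius and $\rho$-continuity controlling the drift of $\varphi$ across each cover element --- the mechanism by which you propagate smallness from level $n$ to all deeper levels is genuinely different. The paper (Proposition~\ref{proposition-Finite-C-v4:3}) stays entirely cover-theoretic: it chooses a locally finite cover $\mathscr{V}$ such that each $\overline{V}$ meets only finitely many members of the previous cover, takes the new scale to be the minimum of $\delta_1(U)/3$ over that finite set, and insists that each $\mathscr{U}_{n+1}$ refine $\mathscr{U}_n$, so that clause (ii) for arbitrary $k>n$ is obtained by chaining consecutive levels and telescoping the Hausdorff-distance estimates. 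You instead package the level-$n$ radii into $\beta_n(z)=\min\{a_U: z\in\overline{U}\}$, which is positive and lower semicontinuous exactly as you argue (the closures are essential: the open-set version is only upper semicontinuous, and a positive u.s.c.\ function need not admit a positive continuous minorant), and then invoke the Dowker insertion theorem on the normal, countably paracompact space $X$ to extract a continuous gauge $h_n$ with $0<h_n\le\beta_n$. This lets you compare level $k$ with level $n$ directly at a meeting point $q$ via $h_{k-1}(x_V)<2h_{k-1}(q)\le 2h_n(q)\le 2a_U$, with no need for the covers to refine one another, and your numerical bound $b_V+s_V+s_U<\tfrac79\,a_U$ closes clause (ii) in a single step; the fact that your tag $x_V$ need not lie in $V$ is harmless, since every estimate is taken at points of $O_{x_V}$. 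The trade-off is that the paper's argument uses nothing beyond elementary manipulations of locally finite covers, whereas yours imports the insertion theorem (still only the normality and countable paracompactness that paracompactness supplies) and in exchange gets cleaner bookkeeping: no chained refinements, and one estimate valid for all $k>n$ simultaneously.
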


To prepare for the proof, we proceed with several observations.

\begin{proposition}
  \label{proposition-Near-sel-v1:1}
  Let $\varepsilon:X\to (0,+\infty)$ be a continuous function on a
  space $X$. Then there exists an open cover $\mathscr{V}$ of $X$ and
  a function $\delta:\mathscr{V}\to (0,+\infty)$ such that
  $\delta(V)\leq \varepsilon(p)$, for every $p\in V\in \mathscr{V}$.
\end{proposition}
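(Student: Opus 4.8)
The plan is to build the cover $\mathscr{V}$ pointwise, using only the continuity and strict positivity of $\varepsilon$; no separation or covering properties of $X$ enter. First I would fix a point $x\in X$. Since $\varepsilon(x)>0$ and $\varepsilon$ is continuous, the preimage
$$V_x=\varepsilon^{-1}\!\left(\left(\tfrac{\varepsilon(x)}{2},+\infty\right)\right)$$
is an open subset of $X$ containing $x$, and on it one has $\varepsilon(p)>\tfrac{\varepsilon(x)}{2}$ for every $p\in V_x$. Letting $x$ range over $X$ yields an open cover $\mathscr{V}=\{V_x:x\in X\}$.

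The second step is to define $\delta$ in a way that does not depend on which point $x$ happens to realise a given member of $\mathscr{V}$. The clean choice is to set, for each $V\in\mathscr{V}$,
$$\delta(V)=\inf\{\varepsilon(p):p\in V\},$$
which is manifestly a function of the set $V$ alone, so no well-definedness issue arises. With this definition the required inequality $\delta(V)\leq\varepsilon(p)$ for every $p\in V$ is immediate from the defining property of the infimum.

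The only point that needs checking is that $\delta$ really takes values in $(0,+\infty)$, i.e.\ that the infimum is strictly positive on each member of $\mathscr{V}$. This is exactly where the construction of the sets $V_x$ pays off: every $V\in\mathscr{V}$ equals $V_x$ for some $x$, and on $V_x$ the function $\varepsilon$ is bounded below by the positive constant $\varepsilon(x)/2$, whence $\delta(V)\geq\varepsilon(x)/2>0$. Thus there is no genuine obstacle; the statement is essentially the observation that a positive continuous function is locally bounded away from zero, repackaged as an open cover together with a compatible lower-bound function $\delta$.
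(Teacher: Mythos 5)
Your proposal is correct and follows essentially the same route as the paper: cover $X$ by the open sets $V_x$ on which $\varepsilon$ exceeds $\varepsilon(x)/2$, then read off $\delta$ from the resulting positive lower bound. The only (cosmetic) difference is that you take $\delta(V)=\inf_{p\in V}\varepsilon(p)$, which sidesteps the choice of a representative $x$ with $V=V_x$ that the paper makes when defining $\delta$.
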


\begin{proof}
  Since $\varepsilon$ is continuous, each point $x\in X$ is contained
  in an open set $V_x$ such that
  $\varepsilon(p)>\frac{\varepsilon(x)}2=\delta_x$, for every
  $p\in V_x$. Take $\mathscr{V}=\{V_x:x\in X\}$, and for each
  $V\in \mathscr{V}$ set $\delta(V)=\delta_x$, where $\delta_x$
  corresponds to some $x\in X$ with $V=V_{x}$.
\end{proof}

Let $\mathscr{F}(Y)$ be the collection of all nonempty closed subsets
of $Y$. A natural topology on $\mathscr{F}(Y)$ is the Hausdorff one
$\tau_{H(\rho)}$ generated by the Hausdorff distance $H(\rho)$
associated to $\rho$. Recall that the \emph{Hausdorff distance} is
defined by
\begin{align}
  \label{eq:Near-sel-v4:1}
  H(\rho)(S,T)&=\inf\big\{\varepsilon>0: S\subset
                \mathbf{O}_\varepsilon(T)\ \text{and}\ T\subset
                \mathbf{O}_\varepsilon(S)\big\}\\
              &=\sup\left\{\rho(S,y)+\rho(y,T): y\in S\cup
                T\right\},\qquad S,T\in \mathscr{F}(Y).\nonumber
\end{align}
The Hausdorff topology $\tau_{H(\rho)}$ is always metrizable even
though the distance $H(\rho)$ may assume infinite values. Moreover,
given any nonempty set $S\subset Y$, we have that
$\rho(y,S)=\rho\left(y,\overline{S}\right)$, for every $y\in Y$. So,
we may consider $H(\rho)(S,T)$ defined on all nonempty subsets of
$Y$. Then, according to \eqref{eq:Finite-C-v2:3} and
\eqref{eq:Near-sel-v4:1}, a mapping $\varphi:X\sto Y$ is
$\rho$-continuous if and only if it is continuous with respect to
$H(\rho)$ as a usual map, namely if for every $\varepsilon>0$, each
point $x\in X$ is contained in an open set $V$ such that
$H(\rho)(\varphi(p),\varphi(x))<\varepsilon$ for every $p\in V$.

\begin{proposition}
  \label{proposition-Finite-C-v4:1}
  Let $\varphi:X\sto Y$ be a $\rho$-continuous mapping on a space $X$,
  and $x\in X$ be such that $\varphi(x)$ is uniformly $UV^*$. Then for
  every $\varepsilon>0$ there exists $\delta_x\in (0,\varepsilon)$ and
  an open set $U_x\subset X$ containing $x$ such that for every
  $p,q\in U_x$,
  \[
    H(\rho)(\varphi(p),\varphi(q))<\frac{\delta_x}2\quad
    \text{and}\quad \mathbf{O}_{\delta_x}(\varphi(p))\lembed{*}
    \mathbf{O}_\varepsilon(\varphi(p)).
  \]
\end{proposition}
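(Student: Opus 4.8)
The plan is to establish the uniform $UV^*$ condition at the single point $x$ for a \emph{shrunken} target radius, and then to transport it to all nearby points $p$ by combining the $\rho$-continuity of $\varphi$ (i.e.\ its continuity with respect to $H(\rho)$) with the monotonicity of $\lembed{*}$. The monotonicity law $A\subset S\lembed{*}T\subset B\Rightarrow A\lembed{*}B$ is the engine of the argument: once $\varphi(p)$ is known to be Hausdorff-close to $\varphi(x)$, each ball around $\varphi(p)$ can be sandwiched between two balls around $\varphi(x)$, so that the relation $\lembed{*}$ already secured for $\varphi(x)$ descends to $\varphi(p)$.

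Concretely, fix $\varepsilon>0$. Since $\varphi(x)$ is uniformly $UV^*$, applying the definition to $\varepsilon/2$ yields $\delta_0\in(0,\varepsilon/2]$ with $\mathbf{O}_{\delta_0}(\varphi(x))\lembed{*}\mathbf{O}_{\varepsilon/2}(\varphi(x))$. I set $\delta_x=\delta_0/2$, so that $\delta_x\in(0,\varepsilon)$, as required. By $\rho$-continuity of $\varphi$ at $x$, I then choose an open set $U_x\ni x$ so small that $H(\rho)(\varphi(p),\varphi(x))<\delta_x/4$ for every $p\in U_x$; the triangle inequality for $H(\rho)$ immediately gives $H(\rho)(\varphi(p),\varphi(q))<\delta_x/2$ for all $p,q\in U_x$, which is the first assertion.

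For the second assertion, fix $p\in U_x$. From $H(\rho)(\varphi(p),\varphi(x))<\delta_x/4$ I read off the two inclusions $\varphi(p)\subset\mathbf{O}_{\delta_x/4}(\varphi(x))$ and $\varphi(x)\subset\mathbf{O}_{\delta_x/4}(\varphi(p))$. Expanding by the ball-composition estimate $\mathbf{O}_a(\mathbf{O}_b(S))\subset\mathbf{O}_{a+b}(S)$ gives
\[
  \mathbf{O}_{\delta_x}(\varphi(p))\subset\mathbf{O}_{\delta_x+\delta_x/4}(\varphi(x))\subset\mathbf{O}_{\delta_0}(\varphi(x))
\]
(using $\tfrac{5}{4}\delta_x=\tfrac{5}{8}\delta_0<\delta_0$) and
\[
  \mathbf{O}_{\varepsilon/2}(\varphi(x))\subset\mathbf{O}_{\varepsilon/2+\delta_x/4}(\varphi(p))\subset\mathbf{O}_{\varepsilon}(\varphi(p))
\]
(using $\delta_x/4\le\varepsilon/16$). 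Feeding these two sandwiching inclusions, together with the relation $\mathbf{O}_{\delta_0}(\varphi(x))\lembed{*}\mathbf{O}_{\varepsilon/2}(\varphi(x))$, into the monotonicity law yields $\mathbf{O}_{\delta_x}(\varphi(p))\lembed{*}\mathbf{O}_{\varepsilon}(\varphi(p))$, completing the proof.

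I expect no conceptual obstacle here, as the argument is essentially bookkeeping; the only point requiring care is to calibrate the constants (the choice $\delta_x=\delta_0/2$ and the slack $\delta_x/4$ absorbed into the continuity neighbourhood) so that the two \emph{enlarged} balls around $\varphi(p)$ still nest inside $\mathbf{O}_{\delta_0}(\varphi(x))$ and $\mathbf{O}_{\varepsilon}(\varphi(p))$, respectively. This is precisely what legitimises the single application of monotonicity.
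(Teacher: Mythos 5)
Your proof is correct and follows essentially the same route as the paper's: apply the uniform $UV^*$ property of $\varphi(x)$ at the level $\varepsilon/2$, shrink the resulting radius (your $\delta_x=\delta_0/2$ is exactly the paper's choice of $\delta_x$ with $\mathbf{O}_{2\delta_x}(\varphi(x))\lembed{*}\mathbf{O}_{\varepsilon/2}(\varphi(x))$), take $U_x$ to be the set where $H(\rho)(\varphi(p),\varphi(x))<\delta_x/4$, and sandwich via monotonicity. Only the bookkeeping of constants differs; no further comment is needed.
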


\begin{proof}
  Take $\delta_x\in \left(0,\frac\varepsilon4\right)$ with
  $\mathbf{O}_{2\delta_x}(\varphi(x)) \lembed{*}
  \mathbf{O}_{\frac\varepsilon2}(\varphi(x))$, and set
  \[
    U_x=\left\{p\in X:
      H(\rho)(\varphi(p),\varphi(x))<\frac{\delta_x}4\right\}.
  \]
  Then $U_x$ is an open set such that
  $H(\rho)(\varphi(p),\varphi(q))<\frac{\delta_x}2$, for every
  $p,q\in U_x$. Moreover, $p\in U_x$ implies
  $\mathbf{O}_{\delta_x}(\varphi(p))\lembed{*}
  \mathbf{O}_\varepsilon(\varphi(p))$ because $\lembed{*}$ is monotone
  and
  \[
    \mathbf{O}_{\delta_x}(\varphi(p))\subset
    \mathbf{O}_{2\delta_x}(\varphi(x)) \lembed{*}
    \mathbf{O}_{\frac\varepsilon2}(\varphi(x)) \subset
    \mathbf{O}_\varepsilon(\varphi(p)).\qedhere
  \]
\end{proof}

\begin{proposition}
  \label{proposition-Finite-C-v4:3}
  Let $X$ be a paracompact space and $\varphi:X\sto Y$ be a
  $\rho$-continuous mapping such that each $\varphi(x)$, $x\in X$, is
  uniformly $UV^*$. Suppose that $\mathscr{U}_1$ is a locally finite
  open cover of $X$ and $\delta_1:\mathscr{U}_1\to (0,+\infty)$. Then
  there exists an open locally finite refinement $\mathscr{U}_2$ of
  $\mathscr{U}_1$ and functions
  $\delta_2,\varepsilon_2:\mathscr{U}_2\to (0,+\infty)$ with
  $\delta_2\leq \varepsilon_2$, such that
  \begin{enumerate}[itemsep=.5ex,label=\upshape{(\roman*)}]
  \item
    $\mathbf{O}_{\delta_2(U)}(\varphi(p))\lembed{*}
    \mathbf{O}_{\varepsilon_2(U)}(\varphi(p))$, for every
    $p\in U\in \mathscr{U}_2$,
  \item $H(\rho)(\varphi(p),\varphi(q))<\frac{\delta_2(U)}2$, for
    every $p,q\in U\in \mathscr{U}_2$,
  \item $\varepsilon_2(U_2)\leq \frac{\delta_1(U_1)}3$, whenever
    $U_i\in \mathscr{U}_i$, $i=1,2$, with $U_1\cap U_2\neq \emptyset$.
  \end{enumerate}
\end{proposition}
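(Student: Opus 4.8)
The plan is to construct a pointwise neighbourhood cover $\{W_x:x\in X\}$ that refines $\mathscr{U}_1$, to attach to each $W_x$ a pair of constants $(\delta_x,\eta_x)$ supplied by Proposition~\ref{proposition-Finite-C-v4:1}, and then to pass to a locally finite open refinement furnished by paracompactness. Properties (i) and (ii) will be inherited directly from Proposition~\ref{proposition-Finite-C-v4:1}, so the only genuine obstacle is arranging (iii): I must guarantee that each piece of the refinement meets only those members $U_1\in\mathscr{U}_1$ whose weight $\delta_1(U_1)$ is at least $3\varepsilon_2$. Local finiteness of $\mathscr{U}_1$ is exactly the tool for this, but it must be used in the right order, namely I must \emph{freeze} the finite family of relevant $\mathscr{U}_1$-members \emph{before} invoking Proposition~\ref{proposition-Finite-C-v4:1}.

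Concretely, fix $x\in X$. Since $\mathscr{U}_1$ is locally finite it is point-finite, so $\mathscr{U}_1(x)=\{U\in\mathscr{U}_1:x\in U\}$ is finite and nonempty. Choose an open $N_x\ni x$ meeting only finitely many members of $\mathscr{U}_1$, and put $W_x^0=N_x\cap\bigcap\mathscr{U}_1(x)$, an open neighbourhood of $x$ (a finite intersection) which is contained in some member of $\mathscr{U}_1$; this is what will make the eventual cover refine $\mathscr{U}_1$. Now let $\mathscr{M}_x=\{U\in\mathscr{U}_1:W_x^0\cap U\neq\emptyset\}$, a finite family with $\mathscr{U}_1(x)\subset\mathscr{M}_x$, and set $\eta_x=\frac13\min\{\delta_1(U):U\in\mathscr{M}_x\}>0$. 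The point is that any neighbourhood contained in $W_x^0$ will meet no member of $\mathscr{U}_1$ outside $\mathscr{M}_x$, so this value of $\eta_x$ survives further shrinking.

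With $\eta_x$ fixed, apply Proposition~\ref{proposition-Finite-C-v4:1} at $x$ with $\varepsilon=\eta_x$ to obtain $\delta_x\in(0,\eta_x)$ and an open $V_x\ni x$ such that $H(\rho)(\varphi(p),\varphi(q))<\frac{\delta_x}2$ and $\mathbf{O}_{\delta_x}(\varphi(p))\lembed{*}\mathbf{O}_{\eta_x}(\varphi(p))$ for all $p,q\in V_x$. Put $W_x=W_x^0\cap V_x$. Then $\{W_x:x\in X\}$ is an open cover of $X$ refining $\mathscr{U}_1$, so by paracompactness it admits a locally finite open refinement $\mathscr{U}_2$. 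For each $U\in\mathscr{U}_2$ fix an index $x(U)$ with $U\subset W_{x(U)}$ and define $\delta_2(U)=\delta_{x(U)}$ and $\varepsilon_2(U)=\eta_{x(U)}$; the inequality $\delta_2\leq\varepsilon_2$ is immediate from $\delta_x<\eta_x$.

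It remains to check (i)--(iii). Since $U\subset W_{x(U)}\subset V_{x(U)}$, the two conclusions of Proposition~\ref{proposition-Finite-C-v4:1} applied on $V_{x(U)}$ yield (i) and (ii) verbatim with the constants just named. For (iii), suppose $U\in\mathscr{U}_2$ and $U_1\in\mathscr{U}_1$ satisfy $U\cap U_1\neq\emptyset$; then $W_{x(U)}^0\cap U_1\supset W_{x(U)}\cap U_1\supset U\cap U_1\neq\emptyset$ because $W_{x(U)}\subset W_{x(U)}^0$, whence $U_1\in\mathscr{M}_{x(U)}$ and therefore $\varepsilon_2(U)=\eta_{x(U)}\leq\frac13\delta_1(U_1)$ by the choice of $\eta_{x(U)}$. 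This verifies all three conditions, and the argument is routine once the ordering ``freeze $\mathscr{M}_x$, then shrink'' is respected.
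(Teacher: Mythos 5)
Your proof is correct. The key idea is the same as the paper's: use local finiteness of $\mathscr{U}_1$ to freeze, for each small region, the finite subfamily of $\mathscr{U}_1$ it can meet, take $\frac13$ of the minimum of $\delta_1$ over that subfamily as the $\varepsilon$ fed into Proposition~\ref{proposition-Finite-C-v4:1}, and only then shrink. You correctly identify that the ordering ``freeze $\mathscr{M}_x$, then shrink'' is the crux, and your verification of (iii) via $W_{x(U)}\subset W_{x(U)}^0$ is sound; the checks of (i), (ii), $\delta_2\leq\varepsilon_2$, and the refinement property all go through.

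Where you differ from the paper is in the organization of the construction. The paper first fixes an auxiliary locally finite open cover $\mathscr{V}$ such that each closure $\overline{V}$ meets only finitely many members of $\mathscr{U}_1$, assigns $\varepsilon(V)=\min_{U\in\mathscr{U}_1^V}\frac{\delta_1(U)}3$ to each $V$, covers each paracompact subspace $\overline{V}$ by a family $\mathscr{W}_V$ obtained from Proposition~\ref{proposition-Finite-C-v4:1}, and takes $\mathscr{U}_2=\{W\cap V\}$; local finiteness of $\mathscr{U}_2$ then has to be argued from that of $\mathscr{V}$ and of each $\mathscr{W}_V$ in $\overline{V}$. You instead work entirely pointwise, building $W_x=W_x^0\cap V_x$ at each $x$, and invoke paracompactness exactly once at the end to pass to a locally finite refinement of $\{W_x:x\in X\}$. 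Your route avoids the two-level cover construction and the $\lambda(U),\mu(U)$ bookkeeping, at the cost of carrying the data $(\delta_x,\eta_x)$ through an arbitrary choice function $x(U)$; both are legitimate, and yours is arguably the more economical write-up of the same argument.
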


\begin{proof}
  Since $\mathscr{U}_1$ is locally finite, the paracompact space $X$
  has a locally finite open cover $\mathscr{V}$ such that each family
  $\mathscr{U}_1^V=\left\{U\in \mathscr{U}_1: U\cap \overline{V}\neq
    \emptyset\right\}$, $V\in \mathscr{V}$, is finite. For
  convenience, set
  \begin{equation}
    \label{eq:Near-sel-vgg:1}
    \varepsilon(V)= \min_{U\in
      \mathscr{U}_1^V}\frac{\delta_1(U)}3>0,\quad \text{whenever $V\in
      \mathscr{V}$.}  
  \end{equation}
  Then by Proposition~\ref{proposition-Finite-C-v4:1}, for each
  $V\in \mathscr{V}$ there exists an open and locally finite in
  $\overline{V}$ cover $\mathscr{W}_V$ of the paracompact space
  $\overline{V}$ and a map
  $\delta_V:\mathscr{W}_V\to (0,\varepsilon(V))$ such that
  $\mathscr{W}_V$ refines $\mathscr{U}_1^{V}$ and for every
  $p,q\in W\in \mathscr{W}_V$,
  \begin{equation}
    \label{eq:Near-sel-vgg:3}
    H(\rho)(\varphi(p),\varphi(q))<\frac{\delta_V(W)}2\quad
    \text{and}\quad \mathbf{O}_{\delta_V(W)}(\varphi(p))\lembed{*}
    \mathbf{O}_{\varepsilon(V)}(\varphi(p)).
  \end{equation}
  
  Finally, we can take
  $\mathscr{U}_2=\left\{W\cap V: W\in \mathscr{W}_V\ \text{and}\ V\in
    \mathscr{V}\right\}$, which is a locally finite open cover of $X$
  because so is $\mathscr{V}$ and each $\mathscr{W}_V$,
  $V\in \mathscr{V}$, is locally finite in $\overline{V}$. To define
  the required maps
  $\delta_2,\varepsilon_2:\mathscr{U}_2\to (0,+\infty)$, for each
  $U\in \mathscr{U}_2$ fix elements $\lambda(U)\in \mathscr{V}$ and
  $\mu(U)\in \mathscr{W}_{\lambda(U)}$ with 
  $U=\mu(U)\cap \lambda(U)$, and set
  \begin{equation}
    \label{eq:Near-sel-vgg:4}
    \delta_2(U)=\delta_{\lambda(U)}(\mu(U))\quad \text{and}\quad
    \varepsilon_2(U)= \varepsilon(\lambda(U)),\ \text{
      $U\in \mathscr{U}_2$.}
  \end{equation}
  
  If $U_1\cap U_2\neq \emptyset$ for some $U_i\in \mathscr{U}_i$,
  $i=1,2$, then $U_1\cap \lambda(U_2)\neq \emptyset$ and, therefore,
  $U_1\in \mathscr{U}_1^{\lambda(U_2)}$. According to
  \eqref{eq:Near-sel-vgg:1}, we get that
  $\varepsilon_2(U_2)=\varepsilon(\lambda(U_2)) \leq
  \frac{\delta_1(U_1)}3$. The rest of the properties are evident from
  \eqref{eq:Near-sel-vgg:3} and \eqref{eq:Near-sel-vgg:4}. The proof
  is complete.
\end{proof}

We are now ready for the proof of Theorem~\ref{theorem-Near-sel-v1:1}.

\begin{proof}[Proof of Theorem~\ref{theorem-Near-sel-v1:1}]
  Let $\varphi$ and $\varepsilon$ be as in this theorem. Since $X$ is
  a paracompact space, by Proposition~\ref{proposition-Near-sel-v1:1},
  it has an open locally finite cover $\mathscr{V}$ and a function
  $\delta:\mathscr{V}\to (0,+\infty)$ such that
  \begin{equation}
    \label{eq:Near-sel-v1:1}
    \delta(V)\leq \varepsilon(p),\quad \text{for every
      $p\in V\in \mathscr{V}$.}
  \end{equation}
  Hence, by Proposition~\ref{proposition-Finite-C-v4:3} (applied with
  $\mathscr{U}_1=\mathscr{V}$ and $\delta_1=\delta$), $X$ has an open
  locally finite cover $\mathscr{U}_0$ which refines $\mathscr{V}$,
  and maps $\delta_0,\varepsilon_0: \mathscr{U}_0\to (0,+\infty)$ with
  $\delta_0\leq \varepsilon_0$, such that for every
  $p,q\in U\in \mathscr{U}_0$,
  \begin{equation}
    \label{eq:Near-sel-v1:2}
    \begin{cases}
      \mathbf{O}_{\delta_0(U)}(\varphi(p))\lembed{*}
      \mathbf{O}_{\varepsilon_0(U)}(\varphi(p)),\\[3pt]
      H(\rho)(\varphi(p),\varphi(q))<\frac{\delta_0(U)}2,\\
      \varepsilon_0(U)\leq \frac{\varepsilon(p)}2.\\
    \end{cases}
  \end{equation}
  To see the last property in \eqref{eq:Near-sel-v1:2}, take
  $p\in U\in \mathscr{U}_0$ and $V\in \mathscr{V}$ with $U\subset
  V$. Then by \eqref{eq:Near-sel-v1:1} and
  Proposition~\ref{proposition-Finite-C-v4:3},
  $\varepsilon_0(U)\leq \frac{\delta(V)}3\leq
  \frac{\varepsilon(p)}3\leq \frac{\varepsilon(p)}2$. \smallskip

  Using Proposition~\ref{proposition-Finite-C-v4:3}, the construction
  can be carried on by induction. Thus, there exists a sequence
  $\mathscr{U}_n$, $n<\omega$, of locally finite open covers of $X$
  and functions $\delta_n,\varepsilon_n:\mathscr{U}_n\to (0,+\infty)$
  with $\delta_n\leq \varepsilon_n$, such that each
  $\mathscr{U}_{n+1}$ refines $\mathscr{U}_n$ and the following
  properties hold for each $p,q\in U\in \mathscr{U}_{n}$:
  \begin{equation}
    \label{eq:Near-sel-v2:1}
    \begin{cases}
      \mathbf{O}_{\delta_n(U)}(\varphi(p))\lembed{*}
      \mathbf{O}_{\varepsilon_n(U)}(\varphi(p)),\\[3pt]
      H(\rho)(\varphi(p),\varphi(q))<\frac{\delta_n(U)}2,\\
      \varepsilon_{n+1}(V)\leq \frac{\delta_n(U)}3,\ \text{whenever
        $V\in \mathscr{U}_{n+1}$ with $V\cap U\neq \emptyset$.}
    \end{cases}
  \end{equation}

  To finish the proof, for each $n<\omega$ take a map
  $\pi_n:\mathscr{U}_n\to X$ with $\pi_n(U)\in U$,
  $U\in \mathscr{U}_n$. Next, define
  $\Phi_n,\Psi_n:\mathscr{U}_n\sto Y$ by
  \begin{equation}
    \label{eq:Near-sel-v5:1}
    \Phi_n(U)=\mathbf{O}_{\delta_n(U)}(\varphi(\pi_n(U)))\quad
    \text{and}\quad
    \Psi_n(U)=\mathbf{O}_{\varepsilon_n(U)}(\varphi(\pi_n(U))),\ U\in
    \mathscr{U}_n.
  \end{equation}

  To see \ref{item:Near-sel-v3:2}, take $k>n$, $U\in \mathscr{U}_n$
  and $V_k\in \mathscr{U}_{k}$ with $V_k\cap U\neq \emptyset$. Since
  each $\mathscr{U}_{i+1}$ refines $\mathscr{U}_i$, $i<\omega$, there
  are elements $V_i\in \mathscr{U}_i$, $n+1\leq i< k$, such that
  $V_{i+1}\subset V_{i}$ for every $n+1\leq i< k$.  Take a point
  $q\in V_k\cap U\subset V_{n+1}\cap U$. Since
  $\pi_k(V_k),q\in V_{n+1}$ and $q,\pi_n(U)\in U$, it follows from
  \eqref{eq:Near-sel-v2:1} that
  \begin{align*}
    H(\rho)(\varphi(\pi_{k}(V_k)),
    &\varphi(\pi_n(U)))\leq\\
    &\leq H(\rho)(\varphi(\pi_{k}(V_k)),\varphi(q))+
      H(\rho)(\varphi(q),\varphi(\pi_n(U)))\\
    &<
      \frac{\delta_{n+1}(V_{n+1})}2+\frac{\delta_n(U)}2
      \leq
      \frac12 \frac{\delta_{n}(U)}3+ \frac{\delta_{n}(U)}2=
      \frac{2\delta_{n}(U)}3.  
  \end{align*}
  For the same reason,
  $\varepsilon_k(V_k)\leq \dots \leq \varepsilon_{n+1}(V_{n+1})\leq
  \frac{\delta_n(U)}3$. Accordingly,
  \begin{align*}
    \Psi_{k}(V_k)=\mathbf{O}_{\varepsilon_{k}(V_k)}(\varphi(\pi_{k}(V_k)))
    &\subset
      \mathbf{O}_{\frac{\delta_n(U)}3}(\varphi(\pi_{k}(V_{k})))\\
    &\subset
      \mathbf{O}_{\frac{\delta_n(U)}3+
      \frac{2\delta_n(U)}3}(\varphi(\pi_n(U)))=\Phi_n(U).
  \end{align*}

  Finally, we show that \ref{item:Near-sel-v3:1} holds as well. The
  ``$\lembed{*}$''-embedding property in \ref{item:Near-sel-v3:1} is
  evident from \eqref{eq:Near-sel-v2:1} and
  \eqref{eq:Near-sel-v5:1}. As for the inclusion
  $\Psi_n(U)\subset \mathbf{O}_{\varepsilon(p)}(\varphi(p))$, where
  ${p\in U\in \mathscr{U}_n}$, it follows from the special case of
  $n=0$. Indeed, for $n>0$, take $U_0\in \mathscr{U}_0$ with
  $U\subset U_0$, which is possible because $\mathscr{U}_n$ refines
  $\mathscr{U}_0$. Then according to \ref{item:Near-sel-v3:2},
  $\Psi_n(U)\subset \Phi_0(U_0)$ because $U\cap U_0\neq
  \emptyset$. Thus, by \eqref{eq:Near-sel-v1:2} and
  \eqref{eq:Near-sel-v5:1}, we also have that
  \begin{align*}
    \Psi_n(U)\subset
    \Phi_0(U_0)&=
                 \mathbf{O}_{\delta_0(U_0)}(\varphi(\pi_0(U_0)))\\
               &\subset \mathbf{O}_{2\delta_0(U_0)}(\varphi(p))\subset
                 \mathbf{O}_{2\varepsilon_0(U_0)}(\varphi(p))\subset 
                 \mathbf{O}_{\varepsilon(p)}(\varphi(p)).\qedhere 
  \end{align*}
\end{proof}

\section{Oriented Simplicial Complexes} 
\label{sec:orient-simpl-compl}

As stated in the Introduction, we identify the vertices of a
simplicial complex $\Sigma$ with its $0$-skeleton $\Sigma^0$. A
simplicial complex $\Sigma$ is called \emph{oriented} if there exists
a partial order $\leq$ on its vertex set $\Sigma^0$ such that each
simplex of $\Sigma$ is linearly ordered with respect to
$\leq$. \medskip

The present section deals with two constructions of continuous maps on
oriented simplicial complexes. In the first construction, for a space
$Y$ and subsets ${S\subset T\subset Y}$, we write $S\lembed{\infty} T$
if $S$ is contractible in $T$.  Moreover, for mappings
$\Phi,\Psi:Z\sto Y$, we will use $\Phi\lembed{\infty}\Psi$ to express
that $\Phi(z)\lembed{\infty} \Psi(z)$, for every $z\in Z$.

\begin{theorem}
  \label{theorem-Near-sel-v7:1}
  Let $\Sigma$ be an oriented simplicial complex with respect to some
  partial order $\leq$ on $\Sigma^0$, and $\Phi,\Psi:\Sigma^0\sto Y$
  be mappings into a space $Y$ such that 
  $\Phi\lembed{\infty} \Psi$ and $\Psi(v)\subset \Phi(u)$,
  whenever $u,v\in \Sigma^0$ with $u<v$.  Then there exists a continuous
  map $h:|\Sigma|\to Y$ such that
  \begin{equation}
    \label{eq:Near-sel-v7:1}
    h(|\sigma|)\subset \Psi(\min \sigma),\quad\text{for every
      $\sigma\in \Sigma$.}
  \end{equation}
\end{theorem}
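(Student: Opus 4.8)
The plan is to build $h$ by induction on the skeleta $\Sigma^k$, extending over one dimension at a time and exploiting the cone structure of each simplex relative to its minimum vertex. The device that makes the separate extensions fit together is to fix, once and for all and independently of any particular simplex, a single contraction for each vertex. Concretely, since $\Phi(v)\lembed{\infty}\Psi(v)$ the set $\Phi(v)$ is contractible in $\Psi(v)$ (in particular $\Phi(v)\neq\emptyset$ and $\Phi(v)\subset\Psi(v)$), so for every $v\in\Sigma^0$ I would fix a point $c_v\in\Psi(v)$ and a homotopy $H_v\colon\Phi(v)\times[0,1]\to\Psi(v)$ with $H_v(y,0)=y$ and $H_v(y,1)=c_v$. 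On the $0$-skeleton I then set $h(v)=c_v\in\Psi(v)$, which already gives \eqref{eq:Near-sel-v7:1} for singletons.

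For the inductive step, suppose $h$ is defined and continuous on $|\Sigma^{k-1}|$ with $h(|\tau|)\subset\Psi(\min\tau)$ whenever $\card\tau\le k$. Fix a $k$-simplex $\sigma=\{v_0<\dots<v_k\}$, put $v_0=\min\sigma$, and let $\sigma_0=\{v_1<\dots<v_k\}$ be the face opposite $v_0$, so that $|\sigma|$ is the cone $v_0*|\sigma_0|$ with apex $v_0$. By the inductive hypothesis $h(|\sigma_0|)\subset\Psi(v_1)$, and since $v_0<v_1$ the hypothesis $\Psi(v_1)\subset\Phi(v_0)$ yields $h(|\sigma_0|)\subset\Phi(v_0)$, the domain of $H_{v_0}$. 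I would then extend $h$ over $|\sigma|$ by the cone formula
\[
  h\big((1-t)v_0+t\,x\big)=H_{v_0}\big(h(x),1-t\big),\qquad x\in|\sigma_0|,\ t\in[0,1].
\]
At $t=1$ this returns the stored value $h(x)$ for $x\in|\sigma_0|$, and at $t=0$ it returns the constant $c_{v_0}=h(v_0)$ irrespective of $x$; hence it descends to a continuous map on the quotient $v_0*|\sigma_0|=|\sigma|$. As its image lies in $H_{v_0}(\Phi(v_0)\times[0,1])\subset\Psi(v_0)$, we get $h(|\sigma|)\subset\Psi(v_0)=\Psi(\min\sigma)$, which is exactly \eqref{eq:Near-sel-v7:1}.

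The only real obstacle is coherence: the displayed formula must agree on $\partial|\sigma|$ with the values produced at earlier stages, both on the faces of $\sigma$ and across simplices sharing a face, and this is precisely where fixing the contractions in advance pays off. Any proper face $\tau\subsetneq\sigma$ either avoids $v_0$, in which case $\tau\subset\sigma_0$ and the formula reduces at $t=1$ to the stored value $h(x)$, $x\in|\tau|\subset|\sigma_0|$; or it contains $v_0$, in which case $v_0=\min\tau$ and $\tau=\{v_0\}\cup\tau_0$ with $\tau_0\subset\sigma_0$, so $|\tau|=v_0*|\tau_0|$ was itself built at the previous stage by the very same cone formula with the same apex $v_0$ and the same contraction $H_{v_0}$. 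Thus the new definition restricts correctly to each face, the extensions over distinct $k$-simplices agree on their shared faces in $|\Sigma^{k-1}|$ where $h$ is already fixed, and $h$ is unambiguously and continuously defined on $|\Sigma^k|$. Carrying the induction through all $k<\omega$ defines $h$ on $\bigcup_{k<\omega}|\Sigma^k|=|\Sigma|$, and since each restriction $h\uhr|\sigma|$ is continuous, $h$ is continuous for the Whitehead topology. The inclusions $\Phi(v)\subset\Psi(v)$ and $\Psi(v)\subset\Phi(u)$ for $u<v$ are exactly what keep the domains and images matching at every step.
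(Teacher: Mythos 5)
Your proof is correct and follows essentially the same route as the paper: both fix one contraction $H_v\colon\Phi(v)\times[0,1]\to\Psi(v)$ per vertex in advance and extend over each simplex by coning off from its minimal vertex, which is exactly what makes the extensions agree on shared faces. The only difference is organizational --- the paper packages the coherence as a per-simplex uniqueness statement (Proposition~\ref{proposition-Near-sel-v8:1}) and then glues, whereas you induct on skeleta and verify the agreement on faces directly; both verifications are sound.
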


The proof of Theorem~\ref{theorem-Near-sel-v7:1} is based on the
following considerations. The \emph{cone} $Z*v$ over a space $Z$ with
a vertex $v$ is the quotient space of $Z\times[0,1]$ obtained by
identifying all points of $Z\times\{1\}$ into a single point $v$.  The
following representation of $Z*v$ will play an important role in
our proof.
\begin{equation}
  \label{eq:Near-sel-v7:2}
  Z*v=\big\{t v + (1-t) z: z\in Z\ \text{and}\ t\in [0,1]\big\}.
\end{equation}
This representation is unique for each point $q\in Z*v$ with
$q\neq v$, and is in good accord with the fact that
$|\sigma|=|\tau|*v$, whenever $\sigma$ is an abstract simplex,
$v\in \sigma$ and $\tau=\sigma\setminus\{v\}\neq \emptyset$ is the
face opposite to the vertex $v$.  Finally, for a space $T$ and
$S\subset T$, let us recall that a homotopy $H:S\times [0,1]\to T$ is
a contraction of $S$ into a point $p\in T$ if
\begin{equation}
  \label{eq:Near-sel-v7:4}
  H(y,0)=y\ \ \text{and}\ \ 
  H(y,1)=p,\quad \text{for every $y\in S$.}
\end{equation}

\begin{proposition}
  \label{proposition-Near-sel-v7:2}
  Let $H:S\times [0,1]\to T$ be a contraction of a subset $S\subset T$
  into a point $p\in T$ of a space $T$. Then for a space $Z$, each
  continuous map $g:Z\to S$ can be extended to a continuous map
  $h:Z*v\to T$ such that
  \begin{equation}
    \label{eq:Near-sel-v7:3}
    h\big(tv+(1-t)z\big)=H(g(z),t),\quad \text{for every $z\in Z$ and
      $t\in [0,1]$.}
  \end{equation}
\end{proposition}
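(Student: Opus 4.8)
The plan is to define $h$ directly by the prescribed formula and to verify that it descends continuously through the quotient map that defines the cone. First I would introduce the auxiliary map $G\colon Z\times[0,1]\to T$ by $G(z,t)=H(g(z),t)$. Since $g\colon Z\to S$ and $H\colon S\times[0,1]\to T$ are continuous, the assignment $(z,t)\mapsto (g(z),t)$ is continuous into $S\times[0,1]$, and composing with $H$ shows that $G$ is continuous.

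Next, let $\pi\colon Z\times[0,1]\to Z*v$ be the canonical quotient map, so that $\pi(z,t)=tv+(1-t)z$. By the definition of the cone, $\pi$ is a quotient map whose only nondegenerate fibre is $\pi^{-1}(v)=Z\times\{1\}$, every other point of $Z*v$ having a unique preimage via the representation~\eqref{eq:Near-sel-v7:2}. To produce $h$ from the universal property of quotient spaces, I must check that $G$ is constant on the fibres of $\pi$; since all fibres other than $Z\times\{1\}$ are singletons, the only thing to verify is that $G$ is constant on $Z\times\{1\}$. This is exactly where the contraction hypothesis enters: for every $z\in Z$ we have $g(z)\in S$, and hence $G(z,1)=H(g(z),1)=p$ by~\eqref{eq:Near-sel-v7:4}. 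Thus $G$ takes the single value $p$ on the fibre $\pi^{-1}(v)$.

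Consequently $G$ factors uniquely as $G=h\circ\pi$ for a map $h\colon Z*v\to T$, and because $Z*v$ carries the quotient topology while $G$ is continuous, $h$ is automatically continuous. Unwinding the definition gives $h(tv+(1-t)z)=G(z,t)=H(g(z),t)$, which is precisely~\eqref{eq:Near-sel-v7:3}. Finally I would record that $h$ extends $g$: identifying $Z$ with $Z\times\{0\}\subset Z*v$, the case $t=0$ yields $h(z)=h(0\cdot v+1\cdot z)=H(g(z),0)=g(z)$ by the first identity in~\eqref{eq:Near-sel-v7:4}.

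The only genuinely delicate point is the well-definedness of $h$ at the cone vertex, and it dissolves at once upon observing that the contraction sends every point of $S$ --- in particular every value $g(z)$ --- to the same point $p$ at time $t=1$; everything else is the routine passage to a map defined on a quotient space.
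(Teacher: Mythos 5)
Your proposal is correct and follows essentially the same route as the paper: define $h$ by the prescribed formula, observe that well-definedness at the vertex is exactly the identity $H(g(z),1)=p$, and note that $t=0$ gives the extension property. The paper's proof leaves the quotient-topology justification of continuity implicit, whereas you spell it out via the factorisation $G=h\circ\pi$; this is a welcome elaboration, not a different argument.
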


\begin{proof}
  Define $h:Z*v\to T$ as in \eqref{eq:Near-sel-v7:3}, which is
  possible because $H(g(z),1)=p$ for every $z\in Z$, see
  \eqref{eq:Near-sel-v7:2} and \eqref{eq:Near-sel-v7:4}. This $h$ is
  as required because $H(g(z),0)=g(z)$, for every $z\in Z$. 
\end{proof}

We apply the above construction to show the special case of
Theorem~\ref{theorem-Near-sel-v7:1} in the setting of an oriented
simplex $\sigma$ by a linear order $\leq$ on its vertices. For
convenience, such a simplex will be denoted by $(\sigma,\leq)$.

\begin{proposition}
  \label{proposition-Near-sel-v8:1}
  Let $(\sigma,\leq)$ be an oriented simplex and
  $\Phi,\Psi:\sigma\sto Y$ be mappings into a space $Y$ such that
  $\Phi\lembed{\infty}\Psi$ and $\Psi(v)\subset \Phi(u)$, for every
  $u,v\in \sigma$ with $u<v$. For every $v\in \sigma$, fix a
  contraction $H_v:\Phi(v)\times[0,1]\to \Psi(v)$ into some point
  $p_v\in \Psi(v)$. Then there exists a unique continuous map
  $h:|\sigma|\to Y$ such that
  \begin{enumerate}[itemsep=.5ex,label=\upshape{(\roman*)}]
  \item\label{item:Near-sel-v8:1} $h(v)=p_v$, $v\in \sigma$,
  \item\label{item:Near-sel-v8:2} $h\big(tv+(1-t)z\big)=H_v(h(z),t)$,
    $z\in |\tau|$ and $t\in [0,1]$, whenever $\tau\subset \sigma$ is a
    face and $v\in \sigma$ with $v< u$ for every $u\in \tau$.
  \end{enumerate}
\end{proposition}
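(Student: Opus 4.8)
The plan is to induct on the cardinality of $\sigma$, building the map $h$ face by face and using Proposition~\ref{proposition-Near-sel-v7:2} to extend over the top-dimensional cone at each step. Let $v_0=\min\sigma$ be the least vertex in the linear order $\leq$. If $\sigma=\{v_0\}$ is a single vertex, then $|\sigma|=\{v_0\}$ and we simply set $h(v_0)=p_{v_0}$; both \ref{item:Near-sel-v8:1} and \ref{item:Near-sel-v8:2} hold vacuously or trivially. Otherwise, let $\tau=\sigma\setminus\{v_0\}$, which is itself an oriented simplex (under the restricted order) with $v_0<u$ for every $u\in\tau$. By the inductive hypothesis applied to $(\tau,\leq)$ together with the restricted contractions $H_v$, $v\in\tau$, there is a unique continuous map $g:|\tau|\to Y$ satisfying \ref{item:Near-sel-v8:1} and \ref{item:Near-sel-v8:2} on $|\tau|$.

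First I would check that $g$ actually maps into $\Phi(v_0)$, so that the contraction $H_{v_0}:\Phi(v_0)\times[0,1]\to\Psi(v_0)$ can be composed with it. This is the key compatibility point: the inductive hypothesis gives $g(|\rho|)\subset\Psi(\min\rho)$ for every face $\rho\subset\tau$ — this is exactly the conclusion \eqref{eq:Near-sel-v7:1} of the theorem we are proving in this special case, which the induction delivers alongside \ref{item:Near-sel-v8:1}–\ref{item:Near-sel-v8:2} — and since $v_0<\min\rho$ for every such $\rho$, the hypothesis $\Psi(\min\rho)\subset\Phi(v_0)$ gives $g(|\tau|)\subset\Phi(v_0)$. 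Next, since $\sigma$ is a simplex with least vertex $v_0$ and opposite face $\tau$, we have the cone decomposition $|\sigma|=|\tau|*v_0$ as in \eqref{eq:Near-sel-v7:2}. I would then apply Proposition~\ref{proposition-Near-sel-v7:2} with $Z=|\tau|$, the map $g:|\tau|\to\Phi(v_0)$, the vertex $v_0$, and the contraction $H_{v_0}$ of $\Phi(v_0)$ into $p_{v_0}\in\Psi(v_0)\subset Y$. This yields a continuous extension $h:|\sigma|\to Y$ with $h\big(tv_0+(1-t)z\big)=H_{v_0}(g(z),t)$ for $z\in|\tau|$ and $t\in[0,1]$, and with $h$ restricting to $g$ on $|\tau|$ (the $t=0$ case).

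It then remains to verify that this $h$ satisfies \ref{item:Near-sel-v8:1} and \ref{item:Near-sel-v8:2} for $\sigma$. Property \ref{item:Near-sel-v8:1}: at $v_0$ we have $h(v_0)=H_{v_0}(g(z),1)=p_{v_0}$, and at vertices $v\in\tau$ the value agrees with $g$, so $h(v)=p_v$ by induction. Property \ref{item:Near-sel-v8:2}: when the distinguished vertex $v$ of the cone formula is $v_0$ itself, the identity is precisely the defining formula \eqref{eq:Near-sel-v7:3} for $h$ just obtained; when $v\in\tau$ (so the relevant face lies in $\tau$), the identity follows from the inductive hypothesis for $g$, since $h$ and $g$ agree on $|\tau|$. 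Uniqueness is proved by the same induction: conditions \ref{item:Near-sel-v8:1}–\ref{item:Near-sel-v8:2} determine $h$ on every vertex and then, via the cone formula at the minimal vertex of each face, on every point of $|\sigma|$.

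The main obstacle I anticipate is the bookkeeping of \ref{item:Near-sel-v8:2}, which must be shown to hold for \emph{every} face $\tau$ and every vertex $v$ below it, not merely for the single cone over the minimal vertex. The clean way to handle this is to carry the full conclusion \eqref{eq:Near-sel-v7:1} (that $h(|\rho|)\subset\Psi(\min\rho)$ for all faces $\rho$) as part of the inductive statement, so that the containment $g(|\tau|)\subset\Phi(v_0)$ needed to define the extension is automatically available; the remaining instances of \ref{item:Near-sel-v8:2} for $v\in\tau$ then reduce to the inductive case on the proper face $\tau$, since on $|\tau|$ the map $h$ is nothing but $g$.
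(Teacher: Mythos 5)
Your proof is correct and takes essentially the same route as the paper's: there, too, $h$ is built by successive cone extensions via Proposition~\ref{proposition-Near-sel-v7:2} along the chain $\tau_k=\tau_{k+1}\cup\{v_k\}$ (your recursion on $\sigma\setminus\{\min\sigma\}$ unwinds to exactly this), with the containment $h(|\tau_{k+1}|)\subset\Psi(v_{k+1})\subset\Phi(v_k)$ carried through the induction just as you propose. The reduction of the general instance of \ref{item:Near-sel-v8:2} to the cone formula at the minimal vertex, and the uniqueness argument, also match the paper's.
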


\begin{proof}
  Suppose that $\sigma=\{v_0,\dots, v_{n+1}\}$, where
  $v_0<v_1<\dots <v_{n+1}$, and consider the maximal chain
  $\tau_{n+1}\subset \tau_n\subset\dots \subset \tau_0=\sigma$ of
  faces of $\sigma$ defined by $\tau_{n+1}=\{v_{n+1}\}$ and
  $\tau_k=\tau_{k+1}\cup\{v_k\}$, $k\leq n$. We will construct the
  required map $h:|\sigma|\to Y$ inductively on the faces $|\tau_k|$,
  $k\leq n+1$, of this chain. To this end, set $S_k=\Phi(v_k)$,
  $T_k=\Psi(v_k)$, $H_k=H_{v_k}$ and $p_k=p_{v_k}$, $k\leq n+1$. Next,
  define $h_{n+1}:|\tau_{n+1}|\to T_{n+1}$ by
  $h_{n+1}(v_{n+1})=p_{n+1}$. Since $|\tau_n|=|\tau_{n+1}|*v_n$ and
  $T_{n+1}=\Psi(v_{n+1})\subset \Phi(v_n)=S_n$, by
  Proposition~\ref{proposition-Near-sel-v7:2}, $h_{n+1}$ has a
  continuous extension $h_n:|\tau_n|\to T_n$ such that
  \begin{align*}
    h_n\big(tv_n+(1-t)z\big)&=H_n(h_{n+1}(z),t)\\
                            &=H_n(h_n(z),t),\quad\text{$z\in
                              |\tau_{n+1}|$ and $t\in [0,1]$.} 
  \end{align*}
  It is also evident that $h_n(v_n)=H_n(p_{n+1},1)=p_n$. The
  construction can be carried on by a finite induction to get a
  continuous map $h=h_0:|\tau_0|=|\sigma|\to T_0$ satisfying
  \ref{item:Near-sel-v8:1} and such that for every
  $0\leq k\leq n$, 
  \begin{equation}
    \label{eq:Near-sel-v8:1}
    h\big(tv_k+(1-t)z\big)=H_{k}(h(z),t),\quad \text{$z\in
      |\tau_{k+1}|$ and $t\in [0,1]$.} 
  \end{equation}
  This $h$ is as required. Indeed, take any continuous map
  $g:|\sigma|\to Y$ satisfying \ref{item:Near-sel-v8:1} and
  \eqref{eq:Near-sel-v8:1}.  Then $g(v_{n+1})=h(v_{n+1})$ and by
  \eqref{eq:Near-sel-v8:1}, we get that
  ${g\uhr|\tau_n|=h\uhr|\tau_n|}$. Inductively, this implies that
  $g=g\uhr|\tau_0|=h\uhr |\tau_0|=h$.\medskip

  To see finally that \ref{item:Near-sel-v8:2} is equivalent to
  \eqref{eq:Near-sel-v8:1}, take a face $\tau\subset \sigma$ and a
  vertex $v\in \sigma$ with $v<u$, for every $u\in \tau$. Then $v=v_k$
  for some $0\leq k\leq n$, and $\tau\subset \tau_{k+1}$. If
  $z\in |\tau|\subset |\tau_{k+1}|$ and $t\in [0,1]$, it follows from
  \eqref{eq:Near-sel-v8:1} that
  \[
    h\big(tv+(1-t)z\big)=h\big(tv_k+(1-t)z\big)=
    H_k(h(z),t)=H_v(h(z),t).\qedhere
  \]
\end{proof}

\begin{proof}[Proof of Theorem \ref{theorem-Near-sel-v7:1}]
  Let $\Sigma$ be an oriented simplicial complex with respect to some
  partial order $\leq$ on $\Sigma^0$, and $\Phi,\Psi:\Sigma^0\sto Y$
  be as in Theorem~\ref{theorem-Near-sel-v7:1}. For each vertex
  ${v\in \Sigma^0}$, fix a contraction
  $H_v:\Phi(v)\times[0,1]\to \Psi(v)$ into some point
  ${p_v\in \Psi(v)}$. Since each simplex $\sigma\in \Sigma$ is
  oriented with respect to $\leq$, by
  Proposition~\ref{proposition-Near-sel-v8:1} applied with
  $\Phi\uhr \sigma$, $\Psi\uhr \sigma$ and the fixed contractions
  $H_v$, $v\in \sigma$, there exists a unique continuous map
  $h_\sigma:|\sigma|\to Y$ satisfying both properties
  \ref{item:Near-sel-v8:1} and \ref{item:Near-sel-v8:2} in that
  proposition. Suppose that $\sigma_1,\sigma_2\in \Sigma$ are
  simplices which have a common face $\tau=\sigma_1\cap
  \sigma_2$. Then the linear order $\leq$ on $\tau$ is the same in
  each one of the oriented simplices $(\sigma_i,\leq)$,
  $i=1,2$. Hence, by Proposition~\ref{proposition-Near-sel-v8:1}, we
  get that $h_{\sigma_1}\uhr |\tau|= h_\tau=h_{\sigma_2}\uhr
  |\tau|$. Accordingly, we may define a map $h:|\Sigma|\to Y$ by
  $h\uhr |\sigma|=h_\sigma$, $\sigma\in \Sigma$. This map $h$ is as
  required. Indeed, it is continuous because so is each $h_\sigma$,
  $\sigma\in \Sigma$. To see that $h$ also has the property in
  \eqref{eq:Near-sel-v7:1}, take a simplex $\sigma\in \Sigma$, and set
  $v=\min\sigma$. If $\sigma=\{v\}$, then by \ref{item:Near-sel-v8:1}
  of Proposition~\ref{proposition-Near-sel-v8:1},
  $h(|\sigma|)=h(v)=h_\sigma(v)=p_v\in \Psi(v)$. Otherwise, if
  $\tau=\sigma\setminus\{v\}\neq \emptyset$, it follows from
  \ref{item:Near-sel-v8:2} of
  Proposition~\ref{proposition-Near-sel-v8:1} that for every
  $z\in |\tau|$ and $t\in [0,1]$,
  \[
    h\big(tv+(1-t)z\big)= h_\sigma\big(tv
    +(1-t)z\big)=H_v(h_\sigma(z),t)\in \Psi(v).
  \]
  Accordingly, $h(|\sigma|)\subset \Psi(v)$ because
  $|\sigma|=|\tau|*v$. 
\end{proof}

Recall that the $k$-skeleton $\Sigma^k$, $k\geq 0$, of a simplicial
complex $\Sigma$ is the simplicial subcomplex
$\Sigma^{k}=\{\sigma\in \Sigma:\card(\sigma)\leq k+1\}$.  A simplicial
complex $\Sigma$ is called \emph{finite-dimensional} if
$\Sigma=\Sigma^k$ for some ${k\geq 0}$, and we say that $\Sigma$ is
\emph{$k$-dimensional} if $k\geq0$ is the smallest integer with this
property, i.e.\ for which $\Sigma=\Sigma^k$. If $\Sigma$ is an
$(n+1)$-dimensional simplicial complex which is oriented with respect
to some partial order $\leq$ on $\Sigma^0$, then we may consider the
chain of subcomplexes
$\Sigma_{n+1}\subset \Sigma_n\subset \dots \subset \Sigma_0=\Sigma$
defined by
\begin{equation}
  \label{eq:Near-sel-v9:2}
  \begin{cases}
    \Sigma_k=(\Sigma_k)^{n+1-k}, &\text{and}\\
    \min\sigma\notin (\Sigma_{k+1})^0, &\text{whenever
      $\sigma\in \Sigma_k\setminus \Sigma_{k+1}$.}
  \end{cases}
\end{equation}
Namely, let $V_0$ be the minimal elements of $\Sigma^0$, i.e.\ $V_0$
consists of all $v\in \Sigma^0$ such that $v=u$ whenever $u\in
\Sigma^0$ with $u\leq v$. Next, let $V_1$ be the minimal elements of
$\Sigma^0\setminus V_0$. Thus, inductively, for each $k\leq n$, we may
define the set $V_{k+1}$ which consists of the minimal elements of
$\Sigma^0\setminus (V_0\cup\dots\cup V_k)$. Since $\Sigma$ is
$(n+1)$-dimensional, each simplex of $\Sigma$ has at most $n+2$
elements. Therefore, $\Sigma^0= V_0\cup \dots \cup V_{n+1}$ and we
may now define the required subcomplexes by 
\begin{equation}
  \label{eq:Near-sel-vgg:2}
  \Sigma_k=\left\{\sigma\in \Sigma: \sigma\subset V_k\cup\dots \cup
    V_{n+1}\right\},\quad k\leq n+1.
\end{equation}

Our second construction deals with finite-dimensional oriented
simplicial complexes by relaxing the requirement in
Theorem~\ref{theorem-Near-sel-v7:1} that $\Phi\lembed{\infty}\Psi$. To
this end, for a space $Y$ and $S\subset T\subset Y$, we will write
that $S\lembed{\omega} T$ if for each finite-dimensional simplicial
complex $\Sigma$, any continuous map $g:|\Sigma|\to S$ is
null-homotopic in $T$. Just like before, for mappings
$\Phi,\Psi:Z\sto Y$, we will use $\Phi\lembed{\omega}\Psi$ to
designate that $\Phi(z)\lembed{\omega} \Psi(z)$, for every
$z\in Z$.\medskip

For a simplicial complex $\Sigma$ and a point $v$ with
$v\notin |\Sigma|$, the \emph{cone} on $\Sigma$ with a vertex $v$, is
the simplicial complex defined by
\begin{equation}
  \label{eq:Near-sel-v9:3}
  \Sigma*v=\Sigma\cup\left\{\sigma\cup \{v\}:
    \sigma\in\Sigma\right\}\cup \{v\}.
\end{equation}
According to \eqref{eq:Near-sel-v7:2}, $|\Sigma*v|$ is the cone
$|\Sigma|*v$ over the geometric realisation $|\Sigma|$ of $\Sigma$.
Moreover, for a space $T$ and $S\subset T$, we have that
$S\lembed{\omega} T$ iff for each finite-dimensional simplicial
complex $\Sigma$ and $v\notin |\Sigma|$, any continuous map
$g:|\Sigma|\to S$ can be extended to a continuous map
$h:|\Sigma|*v\to T$ on the cone $|\Sigma|*v=|\Sigma*v|$, compare with
Proposition \ref{proposition-Near-sel-v7:2}. Thus, we also have
the following theorem.

\begin{theorem}
  \label{theorem-Near-sel-v9:1}
  Let $\Sigma$ be a finite-dimensional simplicial complex which is
  oriented with respect to some partial order $\leq$ on $\Sigma^0$,
  and $\Phi,\Psi:\Sigma^0\sto Y$ be mappings into a space $Y$ such
  that $\Phi\lembed{\omega} \Psi$ and $\Psi(v)\subset \Phi(u)$,
  whenever $u,v\in \Sigma^0$ with $u<v$.  Then there exists a
  continuous map $h:|\Sigma|\to Y$ such that
  \begin{equation}
    \label{eq:Near-sel-v9:1}
    h(|\sigma|)\subset \Psi(\min \sigma),\quad\text{for every
      $\sigma\in \Sigma$.}
  \end{equation}
\end{theorem}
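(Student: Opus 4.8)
The plan is to follow the scheme of the proof of Theorem~\ref{theorem-Near-sel-v7:1}, but to replace the fixed contractions $H_v$ (which are no longer available, since $\Phi\lembed{\omega}\Psi$ is weaker than $\Phi\lembed{\infty}\Psi$) by the cone-extension characterisation of $\lembed{\omega}$ recorded just before the statement, and to organise the construction along the chain of subcomplexes $\Sigma_{n+1}\subset\Sigma_n\subset\dots\subset\Sigma_0=\Sigma$ from \eqref{eq:Near-sel-v9:2}--\eqref{eq:Near-sel-vgg:2}. Here $\Sigma$ is $(n+1)$-dimensional, $V_0,\dots,V_{n+1}$ are the successive layers of minimal elements of $\Sigma^0$, and $\Sigma_k=\{\sigma\in\Sigma:\sigma\subset V_k\cup\dots\cup V_{n+1}\}$. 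I would construct $h$ by a finite reverse induction on $k$, producing continuous maps $h_k:|\Sigma_k|\to Y$ with $h_k(|\sigma|)\subset\Psi(\min\sigma)$ for every $\sigma\in\Sigma_k$ and with $h_{k+1}=h_k\uhr|\Sigma_{k+1}|$; then $h=h_0$ is the desired map on $|\Sigma_0|=|\Sigma|$.

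First I would record the two combinatorial facts behind \eqref{eq:Near-sel-v9:2}. Because every simplex of $\Sigma$ is a chain for $\leq$ while $V_j$ collects the minimal elements of $\Sigma^0\setminus(V_0\cup\dots\cup V_{j-1})$, no simplex can contain two vertices of the same layer; hence each $\sigma\in\Sigma_k$ has at most $n+2-k$ vertices (so $\Sigma_k=(\Sigma_k)^{n+1-k}$ is finite-dimensional), and for $\sigma\in\Sigma_k\setminus\Sigma_{k+1}$ the single vertex of $\sigma$ lying in $V_k$ must be $\min\sigma$. The base complex $\Sigma_{n+1}$ consists only of the singletons $\{v\}$, $v\in V_{n+1}$, so I would set $h_{n+1}(v)$ to be an arbitrary point of $\Psi(v)$, and the required inclusion holds trivially.

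For the inductive step, suppose $h_{k+1}$ has been built. The simplices of $\Sigma_k\setminus\Sigma_{k+1}$ are exactly those with $\min\sigma\in V_k$, and since two distinct vertices of $V_k$ never lie in a common simplex, these simplices are partitioned according to their minima. For $v\in V_k$ set $L_v=\{\tau\in\Sigma_{k+1}:\{v\}\cup\tau\in\Sigma\ \text{and}\ v<u\ \text{for all}\ u\in\tau\}$, a subcomplex of the finite-dimensional complex $\Sigma$, hence finite-dimensional. The simplices of $\Sigma_k$ with minimum $v$ are precisely the members of the cone $L_v*v$ that contain $v$, so $|\Sigma_k|=|\Sigma_{k+1}|\cup\bigcup_{v\in V_k}\big(|L_v|*v\big)$, and a point-in-relative-interior argument (using again that distinct $v,v'\in V_k$ share no simplex) shows that the cones $|L_v|*v$ meet one another only inside $|\Sigma_{k+1}|$. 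For $\tau\in L_v$ one has $v<\min\tau$, so the induction hypothesis together with the hypothesis $\Psi(v')\subset\Phi(v)$ for $v<v'$ gives $h_{k+1}(|\tau|)\subset\Psi(\min\tau)\subset\Phi(v)$; thus $h_{k+1}\uhr|L_v|$ is a continuous map of the finite-dimensional complex $|L_v|$ into $\Phi(v)$. Since $\Phi(v)\lembed{\omega}\Psi(v)$, the cone-extension characterisation extends it to a continuous map $|L_v|*v\to\Psi(v)$ (when $L_v=\emptyset$ one simply assigns to the apex $v$ a point of $\Psi(v)$). I would then define $h_k$ to be $h_{k+1}$ on $|\Sigma_{k+1}|$ and to be these cone extensions on the sets $|L_v|*v$; this is unambiguous because all overlaps lie in $|\Sigma_{k+1}|$, where every piece restricts to $h_{k+1}$. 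Continuity of $h_k$ follows from the Whitehead topology, since its restriction to each simplex of $\Sigma_k$ is continuous, and $h_k(|\sigma|)\subset\Psi(\min\sigma)$ holds for the new simplices (their image lies in $\Psi(v)=\Psi(\min\sigma)$) and for the old ones by the induction hypothesis.

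The step I expect to require the most care is the gluing: verifying that the cones $|L_v|*v$, $v\in V_k$, overlap only along $|\Sigma_{k+1}|$ and that the assembled map is continuous for the weak topology. The conceptual heart of the argument --- and the reason finite-dimensionality of $\Sigma$ is indispensable here, unlike in Theorem~\ref{theorem-Near-sel-v7:1} --- is that $\lembed{\omega}$ only supplies null-homotopies (equivalently, cone extensions) for maps out of finite-dimensional complexes, and each link $L_v$ is finite-dimensional precisely because $\Sigma$ is.
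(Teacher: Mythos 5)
Your proposal is correct and follows essentially the same route as the paper: the same filtration $\Sigma_{n+1}\subset\dots\subset\Sigma_0=\Sigma$ by layers of minimal vertices, the same downward induction extending $h_{k+1}$ over the cones $|L_v|*v$ (the paper's $\Omega_v*v$) via the cone-extension reformulation of $\lembed{\omega}$, and the same use of $\Psi(\min\tau)\subset\Phi(v)$ to place $h_{k+1}(|L_v|)$ inside $\Phi(v)$. Your additional verifications (one vertex per layer in each simplex, the apex in $V_k$ being $\min\sigma$, and the cones meeting only along $|\Sigma_{k+1}|$) are accurate elaborations of what the paper leaves implicit.
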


\begin{proof}
  Suppose that $\Sigma$ is $(n+1)$-dimensional, and consider the chain
  of subcomplexes
  $\Sigma_{n+1}\subset \Sigma_n\subset\dots \subset \Sigma_0=\Sigma$
  with the properties in \eqref{eq:Near-sel-v9:2}, see also
  \eqref{eq:Near-sel-vgg:2}. We are going to construct continuous maps
  $h_k:|\Sigma_k|\to Y$, $k\leq n+1$, satisfying
  \eqref{eq:Near-sel-v9:1} and such that each $h_k$ is an extension of
  $h_{k+1}$. Then we can take $h=h_0$ because $\Sigma_0=\Sigma$.  To
  this end, using that $\Sigma_{n+1}=(\Sigma_{n+1})^0$ is
  $0$-dimensional, we may define $h_{n+1}:\Sigma_{n+1}\to Y$ by
  $h_{n+1}(v)\in \Psi(v)$, for every $v\in \Sigma_{n+1}$. Evidently,
  this $h_{n+1}$ satisfies \eqref{eq:Near-sel-v9:1} with respect to
  any singleton (simplex) of $\Sigma_{n+1}$. \smallskip

  Suppose that $h_{k+1}:\big|\Sigma_{k+1}\big|\to Y$ is a continuous
  map satisfying \eqref{eq:Near-sel-v9:1}, and let us show how to
  extend it to a continuous map $h_{k}:\big|\Sigma_{k}\big|\to Y$
  preserving this property. So, take any vertex $v\in \Sigma^0$ with
  $\{v\}\in \Sigma_k\setminus \Sigma_{k+1}$, and consider the cone
  $\Omega_v* v\subset \Sigma_k$ associated to the subcomplex
  $\Omega_v=\big\{\sigma\in \Sigma_{k+1}: \sigma\cup\{v\}\in
  \Sigma_{k}\big\}$, see \eqref{eq:Near-sel-v9:3}. If
  $\Omega_v=\emptyset$, then $\Omega_v*v$ is the singleton $\{v\}$,
  and we define $h_v:|\Omega_v*v|\to Y$ by $h_v(v)\in
  \Psi(v)$. Suppose that $\Omega_v\neq \emptyset$, and take any
  simplex $\sigma\in \Omega_v\subset \Sigma_{k+1}$. According to
  \eqref{eq:Near-sel-v9:2}, $v<\min\sigma$ and, therefore,
  $\Psi(\min\sigma)\subset \Phi(v)$. Since $h_{k+1}$ satisfies
  \eqref{eq:Near-sel-v9:1}, this implies that
  $h_{k+1}(|\sigma|)\subset \Phi(v)$. Thus,
  $h_{k+1}(|\Omega_v|)\subset \Phi(v)$ and using that
  $\Phi(v)\lembed{\omega} \Psi(v)$, there exists a continuous
  extension $h_v:|\Omega_v*v|\to \Psi(v)$ of $h_{k+1}\uhr
  |\Omega_v|$. Evidently, $h_v$ satisfies \eqref{eq:Near-sel-v9:1}
  with respect to the simplicial complex $\Omega_v*v$.  Finally, we
  may define the required continuous extension $h_k:|\Sigma_k|\to Y$
  of $h_{k+1}$ by letting $h_k\uhr |\Sigma_{k+1}|=h_{k+1}$ and
  $h_k\uhr |\Omega_v*v|=h_v$, for every vertex $v\in \Sigma^0$ with
  $\{v\}\in \Sigma_k\setminus \Sigma_{k+1}$.
\end{proof}

\begin{remark}
  \label{remark-Near-sel-v10:1}
  The embedding relation $\lembed{\omega}$ in
  Theorem~\ref{theorem-Near-sel-v9:1} was used only with subcomplexes
  of $\Sigma$. Hence, this theorem remains valid if the property
  ``$\Phi\lembed{\omega} \Psi$'' is relaxed to the one that for each
  simplicial subcomplex $\Omega\subset \Sigma$ and $v\in \Sigma^0$,
  any continuous map $g:|\Omega|\to \Phi(v)$ is null-homotopic in
  $\Psi(v)$.
\end{remark}

\section{Approximate Selections and $C$-spaces}
\label{sec:appr-select-c}

If a paracompact space is a countable union of closed $C$-subspaces,
then it is itself a $C$-space \cite[Theorem 4.1]{gutev-valov:00}, see
also \cite[Theorem 2.7]{addis-gresham:78}. Accordingly, the space $X$
in Theorem~\ref{theorem-Selections-IDS:1} is a $C$-space. In this
section, we show that Theorem~\ref{theorem-Selections-IDS:1} remains
valid for all paracompact $C$-spaces. It is based on the relation
``$S\lembed{\infty}T$'' defined in the previous section, which is
clearly a monotone embedding relation. In its terms, a subset $S$ of a
metric space $(Y,\rho)$ is \emph{CE} iff it is \emph{uniformly
  $UV^\infty$} (see Section~\ref{sec:monot-embedd-relat}), namely if
for every $\varepsilon>0$ there exists $\delta\in (0,\varepsilon]$
such that
$\mathbf{O}_\delta(S)\lembed{\infty} \mathbf{O}_\varepsilon(S)$.

\begin{theorem}
  \label{theorem-Finite-C-v6:1}
  Let $X$ be a paracompact $C$-space, $(Y,\rho)$ be a metric space and
  $\varphi:X\sto Y$ be a $\rho$-continuous mapping such that each
  $\varphi(x)$, $x\in X$, is uniformly $UV^\infty$.  Then $\varphi$
  has a continuous $\varepsilon$-selection, for every continuous
  function ${\varepsilon:X\to (0,+\infty)}$.
\end{theorem}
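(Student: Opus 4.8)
The plan is to reduce the statement to Theorem~\ref{theorem-Near-sel-v7:1} by building, from the approximating data of Theorem~\ref{theorem-Near-sel-v1:1} together with the defining property of $C$-spaces, a suitably oriented nerve carrying maps $\Phi,\Psi$ to which that theorem applies. Since $\lembed{\infty}$ is monotone and ``uniformly $UV^\infty$'' is precisely CE, Theorem~\ref{theorem-Near-sel-v1:1} (with $\lembed{*}=\lembed{\infty}$) applies to $\varphi$ and $\varepsilon$, yielding open covers $\mathscr{U}_n$, $n<\omega$, of $X$ and mappings $\Phi_n,\Psi_n:\mathscr{U}_n\sto Y$ with properties \ref{item:Near-sel-v3:1} and \ref{item:Near-sel-v3:2}. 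Because $X$ is a $C$-space, applying the definition to the sequence $\{\mathscr{U}_n:n<\omega\}$ produces pairwise disjoint open families $\mathscr{V}_n$, each refining $\mathscr{U}_n$, whose union $\mathscr{V}=\bigcup_{n<\omega}\mathscr{V}_n$ covers $X$.

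I would then take the nerve $\mathscr{N}$ of the indexed cover $\mathscr{V}$, whose vertices are the pairs $(n,V)$ with $V\in\mathscr{V}_n$ and whose simplices are the finite collections of vertices with nonempty common intersection, and orient it by declaring $(m,V)<(n,V')$ whenever $m<n$. The decisive point is that each $\mathscr{V}_n$ is pairwise disjoint, so no simplex of $\mathscr{N}$ can contain two vertices of equal index; hence within any simplex all indices are distinct and the vertices form a chain, so $\mathscr{N}$ is oriented with respect to this partial order. For every vertex $(n,V)$ I fix some $U_{(n,V)}\in\mathscr{U}_n$ with $V\subset U_{(n,V)}$, and define mappings $\Phi,\Psi$ on the vertices of $\mathscr{N}$ by $\Phi(n,V)=\Phi_n\big(U_{(n,V)}\big)$ and $\Psi(n,V)=\Psi_n\big(U_{(n,V)}\big)$.

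The hypotheses of Theorem~\ref{theorem-Near-sel-v7:1} now follow from \ref{item:Near-sel-v3:1} and \ref{item:Near-sel-v3:2}: the former gives $\Phi(n,V)\lembed{\infty}\Psi(n,V)$, hence $\Phi\lembed{\infty}\Psi$, while the latter gives $\Psi(n,V')\subset\Phi(m,V)$ for comparable vertices $(m,V)<(n,V')$ that lie in a common simplex, since then $V\cap V'\neq\emptyset$ forces $U_{(m,V)}\cap U_{(n,V')}\neq\emptyset$ and $n>m$ (this is exactly the containment invoked in the simplex-by-simplex construction behind Theorem~\ref{theorem-Near-sel-v7:1}, which never uses the order outside a single simplex). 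That theorem then yields a continuous $h:|\mathscr{N}|\to Y$ with $h(|\sigma|)\subset\Psi(\min\sigma)$ for every $\sigma\in\mathscr{N}$. Finally, using paracompactness I take a locally finite partition of unity on $X$ indexed by the vertices of $\mathscr{N}$, with the function at $(n,V)$ supported in $V$, and let $\kappa:X\to|\mathscr{N}|$ be the associated canonical map; then $f=h\circ\kappa$ is the required selection. Indeed, for $x\in X$ the carrier of $\kappa(x)$ is a simplex $\sigma$ whose vertices $(n,V)$ all satisfy $x\in V$; its minimum $(n_0,V_0)$ has $x\in V_0\subset U_{(n_0,V_0)}\in\mathscr{U}_{n_0}$, so by \ref{item:Near-sel-v3:1}, $f(x)\in h(|\sigma|)\subset\Psi(n_0,V_0)=\Psi_{n_0}\big(U_{(n_0,V_0)}\big)\subset\mathbf{O}_{\varepsilon(x)}(\varphi(x))$.

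I expect the main obstacle to be organisational rather than homotopy-theoretic, since all the contraction and gluing is absorbed into Theorem~\ref{theorem-Near-sel-v7:1}. The delicate points are to orient the nerve correctly, so that the pairwise disjointness of each $\mathscr{V}_n$ turns every simplex into a chain with intersecting carriers --- this is what matches the combinatorial order $<$ to the analytic nesting in \ref{item:Near-sel-v3:2} --- and to verify that the canonical map $\kappa$ is continuous into the Whitehead topology. The latter holds because local finiteness of the partition of unity confines $\kappa$, on a neighbourhood of each point, to a finite subcomplex, on which the Whitehead topology agrees with the metric one.
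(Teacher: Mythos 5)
Your proposal is correct and follows essentially the same route as the paper: apply Theorem~\ref{theorem-Near-sel-v1:1} with $\lembed{\infty}$, pass to pairwise disjoint refining families via the $C$-space property, orient the nerve by the index of the family, transport $\Phi_n,\Psi_n$ to the vertices, invoke Theorem~\ref{theorem-Near-sel-v7:1}, and compose with a canonical map. The only (harmless) deviation is that you order \emph{all} vertices by index rather than only those with intersecting carriers, so the hypothesis $\Psi(v)\subset\Phi(u)$ of Theorem~\ref{theorem-Near-sel-v7:1} is verified only for comparable vertices lying in a common simplex --- which you correctly note is all that its proof uses, though restricting the order to intersecting pairs (as the paper does) would let you apply the theorem verbatim.
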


\begin{proof}
  For a continuous function $\varepsilon:X\to (0,+\infty)$, take open
  covers $\mathscr{U}_n$, $n<\omega$, of $X$ and mappings
  $\Phi_n,\Psi_n:\mathscr{U}_n\sto Y$ as those in
  Theorem~\ref{theorem-Near-sel-v1:1} (applied with the relation
  $\lembed{\infty}$ and the function $\varepsilon$). Then, since $X$
  is a $C$-space, there exists a sequence $\mathscr{V}_{n}$,
  $n<\omega$, of pairwise disjoint families of nonempty open subsets
  of $X$ such that each $\mathscr{V}_n$, $n<\omega$, refines
  $\mathscr{U}_n$, and $\mathscr{V}=\bigcup_{n<\omega}\mathscr{V}_n$
  is a cover for $X$. Moreover, we assume that the families
  $\mathscr{V}_n$, $n<\omega$, have no common elements.\medskip

  Let $\mathscr{N}(\mathscr{V})$ be the \emph{nerve} of $\mathscr{V}$,
  i.e.\ the simplicial complex defined by
  \begin{equation}
    \label{eq:Near-sel-v10:1}
    \mathscr{N}(\mathscr{V})= \left\{\sigma\subset \mathscr{V}:\sigma\
      \text{is finite and}\ \bigcap\sigma\neq\emptyset\right\}.  
  \end{equation}
  Then there exists a natural orientation of
  $\mathscr{N}(\mathscr{V})$ generated by the families
  $\mathscr{V}_n$, $n<\omega$. Namely, define a partial order $\leq$
  on $\mathscr{V}$ by $V<W$, whenever $V\in \mathscr{V}_n$ and
  $W\in \mathscr{V}_k$ are such that $V\cap W\neq \emptyset$
  and $n<k$. Evidently, by \eqref{eq:Near-sel-v10:1}, this partial
  order makes $\mathscr{N}(\mathscr{V})$ an oriented simplicial
  complex.\smallskip

  Since each $\mathscr{V}_n$ refines $\mathscr{U}_n$, there are maps
  $r_n:\mathscr{V}_n\to \mathscr{U}_n$, $n<\omega$, such that
  $V\subset r_n(V)$, for every $V\in \mathscr{V}_n$. We may now define
  mappings $\Phi,\Psi:\mathscr{V}\sto Y$ by
  $\Phi\uhr \mathscr{V}_n=\Phi_n\circ r_n$ and
  $\Psi\uhr \mathscr{V}_n=\Psi_n\circ r_n$, $n<\omega$. According to
  Theorem~\ref{theorem-Near-sel-v1:1} and the definition of the
  partial order $\leq$ on $\mathscr{V}$, for every
  $V\in \mathscr{V}$ we have that 
  \begin{equation}
    \label{eq:Near-sel-v4:2}
    \begin{cases}
      \Phi(V)\lembed{\infty}\Psi(V)\subset
      \mathbf{O}_{\varepsilon(p)}(\varphi(p)),\
      \text{whenever  $p\in V$, and }\\
      \Psi(V)\subset \Phi(U),\ \text{whenever $U\in \mathscr{V}$ with
        $U<V$ and $U\cap V\neq \emptyset$.}
    \end{cases}
  \end{equation} 
  Thus, by Theorem~\ref{theorem-Near-sel-v7:1}, there exists a
  continuous map ${h:\big|\mathscr{N}(\mathscr{V})\big|\to Y}$
  satisfying \eqref{eq:Near-sel-v7:1} with respect to this orientation
  of $\mathscr{N}(\mathscr{V})$.  Finally, since $X$ is paracompact,
  the cover $\mathscr{V}$ has a canonical map
  $g:X\to |\mathscr{N}(\mathscr{V})|$.  In other words, see
  \cite[Proposition 2.5]{gutev:2018a}, $g$ is a continuous map such
  that $g(x)\in |\Sigma_\mathscr{V}(x)|$, $x\in X$, where
  $\Sigma_\mathscr{V}(x)=\big\{\sigma\in \mathscr{N}(\mathscr{V}):
  x\in \bigcap\sigma\big\}$. The composite map $f=h\circ g$
  \begin{center}
    \begin{tikzcd}
      &\lvert\mathscr{N}(\mathscr{V})\rvert \arrow[d, "h"]\\
      {X} \arrow[ur, "g"] \arrow[r, rightsquigarrow, "\varphi"] & Y
    \end{tikzcd}
  \end{center}
  is now a continuous $\varepsilon$-selection for $\varphi$. Indeed,
  take $x\in X$ and a simplex ${\sigma\in \Sigma_\mathscr{V}(x)}$ with
  $g(x)\in |\sigma|$. Then $x\in \min \sigma$ and by
  \eqref{eq:Near-sel-v7:1} and \eqref{eq:Near-sel-v4:2},
  \[
    f(x)=h(g(x))\in h(|\sigma|)\subset
    \Psi(\min \sigma)\subset
    \mathbf{O}_{\varepsilon(x)}(\varphi(x)).\qedhere
  \]
\end{proof}

\section{Approximate Selections and Finite $C$-spaces} 
\label{sec:appr-select-finite}

Finite $C$-spaces were defined by Borst \cite{MR2280911} for separable
metrizable spaces, subsequently the definition was extended by Valov
\cite{valov:00} for arbitrary spaces. For simplicity, we will consider
these spaces in the realm of normal spaces. In this setting, a
(normal) space $X$ is called a \emph{finite $C$-space} if for any
sequence $\{\mathscr{U} _n:n<\omega\}$ of finite open covers of $X$
there exists a finite sequence $\{\mathscr{V} _n:n\leq k\}$ of
pairwise disjoint open families in $X$ such that each $\mathscr{V} _n$
refines $\mathscr{U} _n$ and $\bigcup_{n\leq k}\mathscr{V} _n$ is a
cover of $X$. The following characterisation of paracompact finite
$C$-spaces was obtained Valov in \cite[Theorem 2.4]{valov:00}, it
brings the defining property of these spaces closer to that of
paracompact $C$-spaces.

\begin{theorem}[\cite{valov:00}]
  \label{theorem-Selections-IDS-v2:2}
  A paracompact space $X$ is a finite $C$-space if and only if for any
  sequence $\{\mathscr{U} _n:n<\omega\}$ of open covers of $X$ there
  exists a finite sequence $\{\mathscr{V} _n:n\leq k\}$ of discrete
  open families in $X$ such that each $\mathscr{V} _n$ refines
  $\mathscr{U} _n$ and $\bigcup_{n\leq k}\mathscr{V} _n$ is a cover of
  $X$.
\end{theorem}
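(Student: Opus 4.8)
The plan is to prove the two implications separately, with the forward (\emph{only if}) direction carrying essentially all the weight. The \emph{if} direction is immediate: suppose the stated covering condition holds and let $\{\mathscr{U}_n:n<\omega\}$ be any sequence of \emph{finite} open covers of $X$. These are in particular open covers, so the hypothesis produces a finite sequence $\{\mathscr{V}_n:n\le k\}$ of discrete open families with each $\mathscr{V}_n$ refining $\mathscr{U}_n$ and $\bigcup_{n\le k}\mathscr{V}_n$ covering $X$. Every discrete family is pairwise disjoint, and $X$, being paracompact, is normal, so this is exactly a witness for $X$ being a finite $C$-space.

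For the \emph{only if} direction, let $X$ be a paracompact finite $C$-space and $\{\mathscr{U}_n:n<\omega\}$ be arbitrary open covers. First I would use paracompactness to replace each $\mathscr{U}_n$ by a locally finite open refinement; since families refining the refinements also refine the originals, it suffices to treat locally finite covers and to produce discrete families refining them. Granting that the construction below yields a finite sequence of \emph{pairwise disjoint, locally finite} open families $\mathscr{V}_n$ refining these covers and covering $X$, the passage to \emph{discrete} families is a routine shrinking argument: a finite union of locally finite families is locally finite, so by normality the covering family admits an open shrinking taken inside each member of each $\mathscr{V}_n$; the shrunken members of a fixed $\mathscr{V}_n$ then have pairwise disjoint closures (these lie in the pairwise disjoint originals) and stay locally finite, whence each such family is discrete.

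The substantive obstacle is to manufacture, from the locally finite covers, the \emph{finite} open covers to which the definition of finite $C$-space literally applies, and this is where the finite $C$-space property must do real work. A locally finite cover may have unbounded order, so no finite colouring of its members is available from dimension alone, and merely passing to a countable subfamily does not help, since a countable open cover of a paracompact space need not admit a finite subcover. The route I would pursue is through the nerve: take a locally finite open refinement of $\mathscr{U}_n$, form its nerve $\mathscr{N}_n$ (a locally finite simplicial complex) together with a canonical map $g_n:X\to|\mathscr{N}_n|$, and build finite open covers of $X$ by pulling back, along $g_n$, open covers of $|\mathscr{N}_n|$ associated with finitely many simplices. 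Feeding a suitably interleaved sequence of such finite covers into the finite $C$-space hypothesis returns finitely many pairwise disjoint open families, which pull back along the $g_n$ to pairwise disjoint families refining the original $\mathscr{U}_n$; local finiteness of each $\mathscr{N}_n$ keeps the pulled-back families locally finite.

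I expect the crux to be ensuring that these finitely many pulled-back families actually \emph{cover} $X$, leaving no uncovered tail, while keeping the correspondence that the family refining the $n$-th finite cover is recorded against $\mathscr{U}_n$. Finite subcomplexes of $\mathscr{N}_n$ cover only part of $|\mathscr{N}_n|$, so the finite covers must be completed by complements and the interleaving arranged so that after finitely many applications of the finite-cover property every point is caught; this is precisely the finitary, weak finite-dimensionality content of the finite $C$-space property and is the one step that cannot be reduced to routine paracompactness. The reduction to locally finite covers and the disjoint-to-discrete upgrade, by contrast, are standard.
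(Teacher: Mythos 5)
The paper offers no proof of this statement --- it is quoted from Valov's paper as \cite[Theorem 2.4]{valov:00} --- so there is no in-house argument to compare against; I can only judge your proposal on its own terms. The two routine flanks are fine: the ``if'' direction is indeed immediate (discrete families are pairwise disjoint, and finite open covers are open covers), and the upgrade from pairwise disjoint \emph{locally finite} open families to discrete ones, by shrinking the locally finite cover $\bigcup_{n\le k}\mathscr{V}_n$ so that closures sit inside the pairwise disjoint originals, is correct. The problem is that the entire content of the theorem lives in the step you describe only as a plan --- producing, from the definition (which accepts only sequences of \emph{finite} open covers), finitely many pairwise disjoint locally finite families refining arbitrary locally finite covers --- and that step is not actually carried out.

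Concretely, the nerve scheme as sketched does not close. A finite cover of $X$ built from the stars of a finite subcomplex of $\mathscr{N}_n$ must be completed by a complementary piece, and any member of the returned disjoint family that lies in that complementary piece need not refine $\mathscr{U}_n$ and has to be discarded; nothing in the construction forces the discarded region to vanish after finitely many rounds. Exhausting a non-compact nerve requires infinitely many finite subcomplexes, whereas the conclusion demands a single finite $k$; moreover each application of the finite $C$-space property yields \emph{some} $k$ depending on the input sequence, so the ``suitable interleaving'' cannot be prearranged without an a priori bound you do not have. You have correctly located the crux but not supplied the idea that overcomes it. The known route --- consistent with what the present paper itself quotes from Valov right after the theorem, namely \cite[Proposition 2.2]{valov:00} that $X$ is a finite $C$-space iff $\beta X$ is a $C$-space --- reduces the problem to the (already available) discrete-refinement statement for paracompact $C$-spaces via a careful passage between locally finite covers of $X$ and covers of $\beta X$, rather than by iterating the finite-cover definition directly. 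To complete your argument you would need either to reproduce such a reduction or to replace the ``interleaving'' paragraph by a precise combinatorial lemma with a proof of termination.
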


Regarding the proper place of finite $C$-spaces, it was shown by Valov
in \cite[Proposition 2.2]{valov:00} that a space $X$ is a finite
$C$-space if and only if its \v{C}ech-Stone compactification $\beta X$
is a $C$-space. Accordingly, each compact $C$-space is a finite
$C$-space. On the other hand, there are $C$-spaces whose
\v{C}ech-Stone compactification is not a $C$-space, see e.g.\
\cite[Remark 3.7]{MR2080284}.\medskip

In this section, we extend Theorem~\ref{theorem-Selections-IDS-v4:1}
to all paracompact finite $C$-spaces. It is based on the relation
``$S\lembed{\omega} T$'' defined in
Section~\ref{sec:orient-simpl-compl}, which is another example of a
monotone embedding relation.  In terms of this relation, we shall say
that a subset $S$ of a metric space $(Y,\rho)$ is \emph{uniformly
  $UV^\omega$} if for each $\varepsilon>0$ there exists
$\delta\in (0,\varepsilon]$ such that
$\mathbf{O}_\delta(S)\lembed{\omega} \mathbf{O}_\varepsilon(S)$.

\begin{theorem}
  \label{theorem-Finite-C-v5:1}
  Let $X$ be a paracompact finite $C$-space, $(Y,\rho)$ be a metric
  space and $\varphi:X\sto Y$ be a $\rho$-continuous mapping such that
  each $\varphi(x)$, $x\in X$, is uniformly $UV^\omega$.
  Then $\varphi$ has a continuous $\varepsilon$-selection, for every
  continuous function ${\varepsilon:X\to (0,+\infty)}$.
\end{theorem}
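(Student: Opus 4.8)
The plan is to transcribe the proof of Theorem~\ref{theorem-Finite-C-v6:1} almost word for word, making two substitutions: the contractibility relation $\lembed{\infty}$ is replaced throughout by $\lembed{\omega}$, and the $C$-space property is replaced by the finite $C$-space characterisation of Theorem~\ref{theorem-Selections-IDS-v2:2}. Fixing a continuous $\varepsilon:X\to(0,+\infty)$, I would first apply Theorem~\ref{theorem-Near-sel-v1:1} to the monotone relation $\lembed{\omega}$ and to $\varepsilon$, obtaining open covers $\mathscr{U}_n$, $n<\omega$, of $X$ together with mappings $\Phi_n,\Psi_n:\mathscr{U}_n\sto Y$ satisfying \ref{item:Near-sel-v3:1} and \ref{item:Near-sel-v3:2}. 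This is the only step that consumes the uniformly $UV^\omega$ hypothesis on the values of $\varphi$ and the $\rho$-continuity of $\varphi$.

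The decisive new step is the next one. Because $X$ is a paracompact finite $C$-space, Theorem~\ref{theorem-Selections-IDS-v2:2} applied to $\{\mathscr{U}_n:n<\omega\}$ returns a \emph{finite} sequence $\mathscr{V}_0,\dots,\mathscr{V}_k$ of discrete open families, each $\mathscr{V}_n$ refining $\mathscr{U}_n$, whose union $\mathscr{V}=\bigcup_{n\leq k}\mathscr{V}_n$ covers $X$; as in the $C$-space proof I may assume the families share no common element. I would then form the nerve $\mathscr{N}(\mathscr{V})$ and orient it exactly as before, declaring $V<W$ whenever $V\in\mathscr{V}_n$ and $W\in\mathscr{V}_m$ with $n<m$ and $V\cap W\neq\emptyset$. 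The point of switching to the finite $C$-space property is that now $\mathscr{N}(\mathscr{V})$ is \emph{finite-dimensional}: since each $\mathscr{V}_n$ is discrete, hence pairwise disjoint, every simplex contains at most one vertex from each of the $k+1$ families, so $\card(\sigma)\leq k+1$ and $\mathscr{N}(\mathscr{V})$ is at most $k$-dimensional. This is precisely the hypothesis under which Theorem~\ref{theorem-Near-sel-v9:1} operates with the weaker relation $\lembed{\omega}$ in place of the $\lembed{\infty}$ demanded by Theorem~\ref{theorem-Near-sel-v7:1}, and verifying it is the one genuinely new obstacle in the argument.

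The remainder is a direct copy of the $C$-space proof. Using refinement maps $r_n:\mathscr{V}_n\to\mathscr{U}_n$ with $V\subset r_n(V)$, I set $\Phi\uhr\mathscr{V}_n=\Phi_n\circ r_n$ and $\Psi\uhr\mathscr{V}_n=\Psi_n\circ r_n$, so that Theorem~\ref{theorem-Near-sel-v1:1} yields $\Phi(V)\lembed{\omega}\Psi(V)\subset\mathbf{O}_{\varepsilon(p)}(\varphi(p))$ for $p\in V$, and $\Psi(V)\subset\Phi(U)$ whenever $U<V$ with $U\cap V\neq\emptyset$. Since $\mathscr{N}(\mathscr{V})$ is finite-dimensional and oriented, Theorem~\ref{theorem-Near-sel-v9:1} provides a continuous map $h:\big|\mathscr{N}(\mathscr{V})\big|\to Y$ with $h(|\sigma|)\subset\Psi(\min\sigma)$ for every simplex $\sigma$. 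Finally, paracompactness of $X$ gives a canonical map $g:X\to\big|\mathscr{N}(\mathscr{V})\big|$ with $g(x)\in\big|\Sigma_\mathscr{V}(x)\big|$, and $f=h\circ g$ is then the desired $\varepsilon$-selection: for $x\in X$, choosing $\sigma\in\Sigma_\mathscr{V}(x)$ with $g(x)\in|\sigma|$ gives $x\in\min\sigma$, whence $f(x)\in h(|\sigma|)\subset\Psi(\min\sigma)\subset\mathbf{O}_{\varepsilon(x)}(\varphi(x))$.
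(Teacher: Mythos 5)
Your proposal is correct and follows essentially the same route as the paper: apply Theorem~\ref{theorem-Near-sel-v1:1} with the relation $\lembed{\omega}$, pass to a finite sequence of discrete refinements via Theorem~\ref{theorem-Selections-IDS-v2:2}, orient the nerve, and invoke Theorem~\ref{theorem-Near-sel-v9:1} in place of Theorem~\ref{theorem-Near-sel-v7:1}. Your explicit verification that $\mathscr{N}(\mathscr{V})$ is at most $k$-dimensional (each simplex meets each pairwise disjoint family $\mathscr{V}_n$ in at most one vertex) is a point the paper leaves implicit, and it is exactly the right justification for applying Theorem~\ref{theorem-Near-sel-v9:1}.
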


\begin{proof}
  We proceed as in the proof of Theorem~\ref{theorem-Finite-C-v6:1},
  but now using Theorem~\ref{theorem-Near-sel-v9:1} instead of
  Theorem~\ref{theorem-Near-sel-v7:1}. Namely, for a continuous
  function $\varepsilon:X\to (0,+\infty)$, take open covers
  $\mathscr{U}_n$, $n<\omega$, of $X$ and mappings
  $\Phi_n,\Psi_n:\mathscr{U}_n\sto Y$ as those in
  Theorem~\ref{theorem-Near-sel-v1:1} (applied with the embedding
  relation $\lembed{\omega}$ and the function $\varepsilon$). Since
  $X$ is a paracompact finite $C$-space, by
  Theorem~\ref{theorem-Selections-IDS-v2:2}, there exists a finite
  sequence $\mathscr{V}_{n}$, $n\leq k$, of discrete families of
  nonempty open subsets of $X$ such that each $\mathscr{V}_n$,
  $n\leq k$, refines $\mathscr{U}_n$, and
  $\mathscr{V}=\bigcup_{n\leq k}\mathscr{V}_n$ is a cover for
  $X$. Next, precisely as in the proof
  of Theorem~\ref{theorem-Finite-C-v6:1}, we define a partial order on
  $\mathscr{V}$ and mappings $\Phi,\Psi:\mathscr{V}\sto Y$ such that
  for every $V\in \mathscr{V}$,
  \begin{equation}
    \label{eq:Near-sel-v10:2}
    \begin{cases}
      \Phi(V)\lembed{\omega}\Psi(V)\subset
      \mathbf{O}_{\varepsilon(p)}(\varphi(p)),\
      \text{whenever  $p\in V$, and }\\
      \Psi(V)\subset \Phi(U),\ \text{whenever $U\in \mathscr{V}$ with
        $U<V$ and $U\cap V\neq \emptyset$.}
    \end{cases}
  \end{equation}
  Thus, by Theorem~\ref{theorem-Near-sel-v9:1}, there exists a
  continuous map $h:|\mathscr{N}(\mathscr{V})|\to Y$ satisfying
  \eqref{eq:Near-sel-v9:1} with respect to the oriented simplicial
  complex $\mathscr{N}(\mathscr{V})$.\smallskip
  
  We may now finish the proof precisely as before. Namely, since $X$
  is paracompact, the cover $\mathscr{V}$ admits a continuous map
  $g:X\to |\mathscr{N}(\mathscr{V})|$ such that $g(x)\in |\sigma_x|$,
  $x\in X$, where
  $\sigma_x=\{V\in \mathscr{V}: x\in V\}\in \mathscr{N}(\mathscr{V})$,
  see \cite{dowker:47}. Then the composite map $f=h\circ g$ is a
  continuous $\varepsilon$-selection for $\varphi$. Indeed,
  $x\in \min \sigma_x$ and by \eqref{eq:Near-sel-v9:1} and
  \eqref{eq:Near-sel-v10:2}, we get that
  $f(x)=h(g(x))\in h(|\sigma_x|)\subset \Psi(\min \sigma_x)\subset
  \mathbf{O}_{\varepsilon(x)}(\varphi(x))$.
\end{proof}

\begin{remark}
  \label{remark-Near-sel-vgg:2}
  For an infinite cardinal $\tau$, we shall say that a subset $S$ of a
  metric space $(Y,\rho)$ is \emph{uniformly $\tau$-$UV^\omega$} if
  for each $\varepsilon>0$ there exists $\delta\in(0,\varepsilon]$
  such that for every finite-dimensional simplicial complex $\Sigma$
  with $\card(\Sigma^0)<\tau$, every continuous map
  $g:|\Sigma|\to \mathbf{O}_\delta(S)$ is null-homotopic in
  $\mathbf{O}_\varepsilon(S)$. Also, let us recall that the
  \emph{degree of compactness} of space $X$ is the least infinite
  cardinal $\kappa(X)$ such that every open cover of $X$ has an open
  refinement of cardinality less than ${\kappa(X)}$, see
  \cite{choban-mihaylova-nedev:08}. Using the same proof,
  Theorem~\ref{theorem-Finite-C-v5:1} remains valid if each
  $\varphi(x)$, $x\in X$, is only assumed to be uniformly
  $\kappa(X)$-$UV^\omega$. Indeed, if $\mathscr{V}_n$, $n\leq k$, are
  as in that proof, then ${\card(\mathscr{V})<\kappa(X)}$ because each
  family $\mathscr{V}_n$, $n\leq k$, is discrete. Hence, according to
  Remark~\ref{remark-Near-sel-v10:1}, we can still apply
  Theorem~\ref{theorem-Finite-C-v6:1} to get a continuous map
  $h:|\mathscr{N}(\mathscr{V})|\to Y$ satisfying
  \eqref{eq:Near-sel-v9:1} with respect to the oriented simplicial
  complex $\mathscr{N}(\mathscr{V})$. Having already the map $h$, we
  can finish the proof precisely as before. One benefit of this
  refined version of Theorem~\ref{theorem-Finite-C-v5:1} is that it
  implies Theorem~\ref{theorem-Selections-IDS-v4:1}. Namely, for a
  compact space $X$ and a metric space $(Y,\rho)$, a mapping
  $\varphi:X\sto Y$ has uniformly $\kappa(X)$-$UV^\omega$-values
  precisely when its values are as in
  Theorem~\ref{theorem-Selections-IDS-v4:1}.
\end{remark}

\section{Approximate Selections and Absolute Retracts}

Here, by an A(N)R we mean a metrizable space which is an
\emph{Absolute} (\emph{Neighbourhood}) \emph{Retract} for the
metrizable spaces.  A closed subset $Y\subset E$ of a metric space
$(E,\rho)$ is a \emph{uniform neighbourhood retract} of $E$ if to
every $\varepsilon > 0$ there corresponds $\eta(\varepsilon) > 0$ such
that there exists a retraction
$r:\mathbf{O}_{\eta(+\infty)}(Y)=
\bigcup_{\varepsilon>0}\mathbf{O}_{\eta(\varepsilon)}(Y)\to Y$ with
$\rho(z,r(z)) < \varepsilon$, whenever
$z\in \mathbf{O}_{\eta(\varepsilon)}(Y)$. If one can take
$\eta(+\infty) = +\infty$ (so that the domain of $r$ is always $E$),
then $Y$ is called a \emph{uniform retract} of $E$. A metric space
$(Y,\rho)$ is called a \emph{uniform} A(N)R if it is a uniform
(neighbourhood) retract of any metric space containing $(Y,\rho)$
isometrically as a closed subset. Uniform ANR's and AR's were
considered by Toru\'{n}czyk \cite{MR365471,MR445503} and Michael
\cite{michael:79}, see also Dugundji \cite{dugundji:58}. It should be
remarked that these uniform retracts are slightly different from those
considered by Isbell \cite{MR141074,isbell:64}. We now have the
following application of Theorem~\ref{theorem-Finite-C-v6:1}.

\begin{corollary}
  \label{corollary-Near-sel-vgg:1}
  Let $X$ be a paracompact $C$-space, $(Y,\rho)$ be a metric space
  which is a uniform ANR, and $\varphi:X\to \mathscr{F}(Y)$ be a
  $\rho$-continuous mapping such that each $\varphi(x)$, $x\in X$, is
  a uniform retract of $Y$.  Then $\varphi$ has a continuous
  $\varepsilon$-selection, for every continuous function
  ${\varepsilon:X\to (0,+\infty)}$.
\end{corollary}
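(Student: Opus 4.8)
The plan is to obtain the corollary as a direct application of Theorem~\ref{theorem-Finite-C-v6:1}. All of its hypotheses except one are already in place: $X$ is a paracompact $C$-space, $(Y,\rho)$ is a metric space, and $\varphi$ is $\rho$-continuous. Hence everything reduces to checking that each value $\varphi(x)$ is uniformly $UV^\infty$, i.e.\ CE. So I would fix $x\in X$, abbreviate $S=\varphi(x)\in\mathscr{F}(Y)$, and extract from the hypotheses the two ingredients the verification needs: a uniform retraction $r:Y\to S$ with a gauge $\eta$ (so that $\rho(z,r(z))<\alpha$ whenever $z\in\mathbf{O}_{\eta(\alpha)}(S)$), coming from the fact that $S$ is a uniform retract of $Y$; and the contractibility of $S$, which is the role played by $S$ being an absolute retract. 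The uniform ANR structure of the ambient space $Y$ supplies the third ingredient: uniformly close maps into $Y$ are joined by a metrically small homotopy.

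Given $\varepsilon>0$, I would build a $\delta\in(0,\varepsilon]$ with $\mathbf{O}_\delta(S)\lembed{\infty}\mathbf{O}_\varepsilon(S)$ by concatenating two homotopies inside $\mathbf{O}_\varepsilon(S)$. First, since $Y$ is a uniform ANR, choose $\gamma\in(0,\varepsilon/3)$ so that any two $\gamma$-close maps into $Y$ are homotopic through a homotopy that displaces points by less than $\varepsilon/3$ (Toru\'nczyk~\cite{MR365471,MR445503}, Michael~\cite{michael:79}). Put $\delta=\min\{\eta(\gamma),\varepsilon/3\}$. For $z\in\mathbf{O}_\delta(S)$ one then has $\rho(z,r(z))<\gamma$, so the inclusion $\mathbf{O}_\delta(S)\hookrightarrow Y$ and the corestriction $r\uhr\mathbf{O}_\delta(S)$ are $\gamma$-close; let $G:\mathbf{O}_\delta(S)\times[0,1]\to Y$ be the resulting small homotopy between them. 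Because $\rho(G(z,t),z)<\varepsilon/3$ and $z\in\mathbf{O}_{\varepsilon/3}(S)$, every value $G(z,t)$ lies in $\mathbf{O}_{2\varepsilon/3}(S)\subset\mathbf{O}_\varepsilon(S)$. Thus the inclusion of $\mathbf{O}_\delta(S)$ into $\mathbf{O}_\varepsilon(S)$ is homotopic, within $\mathbf{O}_\varepsilon(S)$, to a map whose image lies in $S$.

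To finish, I would compose with a contraction of $S$. As $S$ is contractible, $r\uhr\mathbf{O}_\delta(S)$ is null-homotopic inside $S\subset\mathbf{O}_\varepsilon(S)$; concatenating this with $G$ exhibits the inclusion $\mathbf{O}_\delta(S)\hookrightarrow\mathbf{O}_\varepsilon(S)$ as null-homotopic, that is, $\mathbf{O}_\delta(S)$ is contractible in $\mathbf{O}_\varepsilon(S)$. Since $\varepsilon>0$ was arbitrary, $S=\varphi(x)$ is uniformly $UV^\infty$; since $x$ was arbitrary, all values of $\varphi$ are. Theorem~\ref{theorem-Finite-C-v6:1} then delivers a continuous $\varepsilon$-selection for every continuous $\varepsilon:X\to(0,+\infty)$, as required.

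The heart of the matter, and the step I expect to demand the most care, is the first homotopy: one must keep the passage from the inclusion to the retraction \emph{inside} the prescribed neighbourhood $\mathbf{O}_\varepsilon(S)$. This is precisely where the \emph{uniform} (not merely topological) ANR hypothesis on $Y$ is indispensable, and it has to be dovetailed with the gauge $\eta$ of the uniform retraction so that both the displacement of $G$ and the distance of $z$ from $S$ stay below $\varepsilon/3$. The contractibility of the value $\varphi(x)$, furnished by its being an absolute retract, is what upgrades this essentially local estimate to the global contractibility of the whole neighbourhood $\mathbf{O}_\delta(S)$ demanded by the $UV^\infty$-condition; without it (for a non-contractible retract such as a circle sitting inside itself) the neighbourhoods need not contract at all.
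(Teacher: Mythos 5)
Your argument is essentially identical to the paper's: both reduce to Theorem~\ref{theorem-Finite-C-v6:1} by verifying that each value is uniformly $UV^\infty$, combining the gauge $\eta$ of the uniform retraction with the small-homotopy property of the uniform ANR $Y$ (so that the inclusion of $\mathbf{O}_\delta(\varphi(x))$ and the restricted retraction, being $\gamma$-close, are homotopic within $\mathbf{O}_\varepsilon(\varphi(x))$), and then concatenating with a contraction of $\varphi(x)$, which both proofs obtain from $\varphi(x)$ being an AR. Your explicit $\varepsilon/3$ bookkeeping is if anything slightly more careful than the paper's about keeping the first homotopy inside $\mathbf{O}_\varepsilon(\varphi(x))$, but this is a refinement of the same argument, not a different route.
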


\begin{proof}
  According to Theorem~\ref{theorem-Finite-C-v6:1}, it suffices to
  show that each $\varphi(x)$, $x\in X$, is uniformly $UV^\infty$. To
  this end, take a point $p\in X$, and let
  $r:\mathbf{O}_{\eta(+\infty)}(\varphi(p))\to \varphi(p)$ be the
  corresponding retraction, where $\eta(\varepsilon)$ is as in the
  definition of a uniform retract for this subset
  $\varphi(p)\subset Y$. Moreover, since $(Y,\rho)$ is a uniform ANR,
  to each $\varepsilon>0$ there exists $\gamma(\varepsilon)>0$ such
  that any two $\gamma(\varepsilon)$-close continuous maps from an
  arbitrary space to $Y$ are $\varepsilon$-homotopic, see
  e.g.\ \cite[Theorem 6.8.6]{MR3099433}. Finally, set
  $\delta(\varepsilon)= \eta(\gamma(\varepsilon))$, and let us show
  that this works. So, let
  $g=r\uhr \mathbf{O}_{\delta(\varepsilon)}(\varphi(p))$ and
  $f:\mathbf{O}_{\delta(\varepsilon)}(\varphi(p))\to
  \mathbf{O}_{\delta(\varepsilon)}(\varphi(p))$ be the identity of
  $\mathbf{O}_{\delta(\varepsilon)}(\varphi(p))$. Since
  $\delta(\varepsilon)=\eta(\gamma(\varepsilon))$, we have that
  $\rho(f(z),g(z))= \rho(z,r(z))< \gamma(\varepsilon)$, for every
  $z\in \mathbf{O}_{\delta(\varepsilon)}(\varphi(p))$.  Hence, $f$ and
  $g$ are $\gamma(\varepsilon)$-close and by the choice of
  $\gamma(\varepsilon)$, they are also $\varepsilon$-homotopic.  On
  the other hand, $\varphi(p)$ is contractible being an AR, see
  \cite[Theorem 12.4]{dugundji:58}. Therefore, $g$ is null-homotopic
  in $\varphi(p)$. Thus, the map $f$, i.e.\ the identity of
  $\mathbf{O}_{\varepsilon}(\varphi(p))$, is null-homotopic as well.
\end{proof}

In case the domain of $\varphi$ is a paracompact finite $C$-space, we
may relax both properties that each $\varphi(x)$, $x\in X$, is a
uniform ANR and a contractible set (being an AR). In fact, the
contractibility can be also relaxed in
Corollary~\ref{corollary-Near-sel-vgg:1}. Namely, for a subset
$S\subset T$ of a space $T$ and $k\geq 0$, we write that
$S\lembed{k} T$ if every continuous map of the $k$-sphere in $S$ can
be extended to a continuous map of the $(k+1)$-ball in $T$. A space
$S$ is called \emph{aspherical}, or $C^\omega$, if $S\lembed{k} S$,
for every $k\geq 0$.  The following property describing aspherical
spaces follows easily from the defining relation that $S\lembed{k}S$,
for all $k\geq 0$.

\begin{proposition}
  \label{proposition-Finite-C-v2:1}
  A space $S$ is $C^\omega$ if and only if for every simplicial
  complex $\Sigma$, each continuous map $g:|\Sigma|\to S$ is
  null-homotopic.
\end{proposition}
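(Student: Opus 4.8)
The plan is to prove both directions of the equivalence, exploiting that the property $S\lembed{k}S$ for all $k\geq 0$ is precisely a statement about filling spheres, while null-homotopy of maps on geometric realisations is a statement about filling arbitrary polyhedra. The nontrivial direction is that asphericity (a condition only about spheres) implies that \emph{every} map out of a geometric realisation $|\Sigma|$ is null-homotopic.

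First I would dispose of the easy direction. Suppose every continuous $g:|\Sigma|\to S$ is null-homotopic. For a fixed $k\geq 0$, the $k$-sphere is homeomorphic to the geometric realisation of the boundary complex $\partial\Delta^{k+1}$ of a $(k+1)$-simplex, which is a finite simplicial complex. Hence any map of the $k$-sphere into $S$ is null-homotopic in $S$, and a standard argument converts a null-homotopy into an extension over the $(k+1)$-ball (the ball is the cone on the sphere, and a contraction supplies the extension exactly as in Proposition~\ref{proposition-Near-sel-v7:2}). Therefore $S\lembed{k}S$ for every $k\geq 0$, so $S$ is $C^\omega$.

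For the converse, assume $S$ is $C^\omega$, i.e.\ $S\lembed{k}S$ for all $k$. Given a simplicial complex $\Sigma$ and a continuous $g:|\Sigma|\to S$, I would build a null-homotopy by extending $g$ over a cone on $|\Sigma|$, constructing the extension skeleton by skeleton. The key observation is that $|\Sigma*v|=|\Sigma|*v$, so extending $g$ to the cone is exactly producing a continuous map on $|\Sigma*v|$ restricting to $g$ on $|\Sigma|$. Working inside the cone complex $\Sigma*v$, I process the new simplices $\sigma\cup\{v\}$ in order of increasing dimension. Each such simplex is a cone over $\sigma$, whose boundary sphere has already received a map (into $S$, via $g$ and the previously constructed extension); the relation $S\lembed{k}S$ lets me fill that sphere with a map of the next ball into $S$. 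Assembling these compatibly over all simplices yields a continuous extension $h:|\Sigma*v|\to S$, and since the cone is contractible with apex $v$, composing $g$ with the straight-line homotopy toward $v$ and then applying $h$ exhibits $g$ as null-homotopic in $S$.

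The main obstacle will be the inductive gluing in the converse: I must ensure that the extensions chosen on individual simplices agree on shared faces and that the resulting map is continuous in the Whitehead topology on $|\Sigma|$, which requires only that the restriction to each $|\sigma|$ be continuous. The orientation machinery of Section~\ref{sec:orient-simpl-compl} is tailor-made for exactly this kind of face-by-face extension (compare Theorems~\ref{theorem-Near-sel-v7:1} and \ref{theorem-Near-sel-v9:1}), so I expect the bookkeeping to reduce to a routine skeletal induction once the sphere-filling steps are in place; the genuinely load-bearing input is simply the definition $S\lembed{k}S$ applied at each dimension, together with the identification of the $(k+1)$-ball as the cone on the $k$-sphere.
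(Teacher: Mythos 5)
Your argument is correct: the paper states Proposition~\ref{proposition-Finite-C-v2:1} without proof, remarking only that it ``follows easily from the defining relation'' $S\lembed{k}S$, and your skeleton-by-skeleton extension of $g$ over the cone $|\Sigma*v|$ --- filling each new simplex $|\sigma\cup\{v\}|$ as a $(k+1)$-ball whose boundary sphere already carries a map into $S$ --- is exactly the routine argument being alluded to. Both directions, including the reduction of the sphere case to $|\partial\Delta^{k+1}|$ in the easy implication and the appeal to the Whitehead topology for continuity of the glued extension in the converse, are sound.
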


The importance of aspherical spaces is that a metrizable space is an
AR iff it is an aspherical ANR, \cite[Theorem
12.4]{dugundji:58}. \medskip

Evidently, a compact metric space is a uniform A(N)R iff it is an
A(N)R. Following Lefschetz \cite{MR0007094}, a compact metric space
$(Y,\rho)$ is called $LC^*$ if for every $\varepsilon>0$ there exists
$\eta(\varepsilon)>0$ such that if $\Sigma$ is a finite simplicial
complex and $K\subset \Sigma$ is a subcomplex containing the vertex
set $\Sigma^0$, then every continuous map $g:|K|\to Y$ with
$\diam(g(|K\cap \sigma|))<\eta(\varepsilon)$ for every
$\sigma\in \Sigma$, can be extended to a continuous map
$f:|\Sigma|\to Y$ such that $\diam(f(|\sigma|))<\varepsilon$,
$\sigma\in \Sigma$. It was shown by Lefschetz \cite[Theorem
6.6]{MR0007094} that a compact metric space $(Y,\rho)$ is a (uniform)
ANR if and only if it is $LC^*$, while $(Y,\rho)$ is an AR iff it is
both $LC^*$ and $C^\omega$. In \cite[Theorem 7.1]{michael:79}, Michael
extended this characterisation to arbitrary uniform ANR's by showing
that a metric space $(Y,\rho)$ is a uniform ANR if and only if it has
the above Lefschetz property for arbitrary simplicial complexes
$\Sigma$ rather than finite. Here, we are interested in another
extension of the $LC^*$ property to arbitrary subsets of metric
spaces. Namely, we shall say that a subset $S\subset Y$ of a metric
space $(Y,\rho)$ is \emph{uniformly $LC^*$} if for every
$\varepsilon>0$ there exists $\eta(\varepsilon)>0$ such that if
$\Sigma$ is a finite-dimensional simplicial complex and
$K\subset \Sigma$ is a subcomplex containing the vertex set
$\Sigma^0$, then each continuous map $g:|K|\to S$ with
$\diam(g(|K\cap \sigma|))<\eta(\varepsilon)$ for every
$\sigma\in \Sigma$, can be extended to a continuous map
$f:|\Sigma|\to S$ such that $\diam(f(|\sigma|))<\varepsilon$,
$\sigma\in \Sigma$. The following property of uniformly $LC^*$ sets
was essentially established by Michael in \cite[Lemma
11.1]{michael:56b}.

\begin{proposition}
  \label{proposition-Near-sel-vgg:1}
  Let $S\subset Y$ be a uniformly $LC^*$ subset of a metric space
  $(Y,\rho)$. Then for every $\varepsilon>0$ there exists
  $\gamma(\varepsilon)>0$ such that if $Z$ is a finite-dimensional
  paracompact space and $g:Z\to \mathbf{O}_{\gamma(\varepsilon)}(S)$
  is a continuous map, then there exists a continuous map $h:Z\to S$
  such that $h(z)\in \mathbf{O}_\varepsilon(g(z))$, for every
  $z\in Z$.
\end{proposition}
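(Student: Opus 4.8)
The plan is to reduce the statement to the defining extension property of uniformly $LC^*$ sets, applied over the nerve of a finite-dimensional cover of $Z$. First I would fix $\varepsilon>0$, let $\eta=\eta(\varepsilon/2)>0$ be the constant furnished by the uniformly $LC^*$ property of $S$ with $\varepsilon/2$ in place of $\varepsilon$, and choose $\gamma=\gamma(\varepsilon)>0$ so small that $4\gamma<\eta$ and $2\gamma<\varepsilon/2$ (for instance $\gamma=\tfrac15\min\{\eta,\varepsilon/2\}$); note that this $\gamma$ depends only on $\varepsilon$. Now let $Z$ be a finite-dimensional paracompact space, say of covering dimension $n$, and let $g:Z\to \mathbf{O}_\gamma(S)$ be continuous. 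For each $z\in Z$ we have $g(z)\in \mathbf{O}_\gamma(S)$, and continuity of $g$ gives an open set $O_z\ni z$ with $\diam\big(g(O_z)\big)<\gamma$.

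Since $Z$ is paracompact with $\dim Z\le n$, the cover $\{O_z:z\in Z\}$ has a locally finite open refinement $\mathscr{W}$ of order at most $n+1$; in particular $\diam\big(g(W)\big)<\gamma$ for every $W\in \mathscr{W}$, and the nerve $\mathscr{N}(\mathscr{W})$ is an $n$-dimensional, hence finite-dimensional, simplicial complex with vertex set $\mathscr{W}$. For each $W\in \mathscr{W}$ I would fix a point $z_W\in W$ together with a point $s_W\in S$ satisfying $\rho\big(g(z_W),s_W\big)<\gamma$, and define a map on the vertex set by $W\mapsto s_W$. If $\sigma\in \mathscr{N}(\mathscr{W})$, then $\bigcap\sigma\neq\emptyset$, and picking $z^*\in\bigcap\sigma$ the triangle inequality together with $\diam\big(g(W)\big)<\gamma$ yields $\rho(s_W,s_{W'})<4\gamma<\eta$ for any two vertices $W,W'$ of $\sigma$. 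Thus the vertex map has diameter $<\eta$ on the vertex set of every simplex, and the uniformly $LC^*$ property of $S$, applied with $\Sigma=\mathscr{N}(\mathscr{W})$, $K=\Sigma^0$ and $\varepsilon/2$, produces a continuous extension $\bar h:\big|\mathscr{N}(\mathscr{W})\big|\to S$ with $\diam\big(\bar h(|\sigma|)\big)<\varepsilon/2$ for every simplex $\sigma\in \mathscr{N}(\mathscr{W})$.

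Finally, since $\mathscr{W}$ is a locally finite open cover of the paracompact space $Z$, it admits a canonical map $\kappa:Z\to \big|\mathscr{N}(\mathscr{W})\big|$ with $\kappa(z)$ lying in the geometric simplex spanned by $\{W\in\mathscr{W}:z\in W\}$, see \cite{dowker:47} and \cite[Proposition 2.5]{gutev:2018a}, and I would set $h=\bar h\circ\kappa:Z\to S$. To verify the estimate, fix $z\in Z$ and let $\sigma$ be the carrier simplex of $\kappa(z)$, so that $z\in W$ for every vertex $W$ of $\sigma$ and $\kappa(z)\in|\sigma|$. For any such vertex $W$, both $h(z)=\bar h(\kappa(z))$ and $s_W=\bar h(W)$ lie in $\bar h(|\sigma|)$, whence $\rho\big(h(z),s_W\big)<\varepsilon/2$; moreover $\rho\big(s_W,g(z)\big)\le \rho\big(s_W,g(z_W)\big)+\rho\big(g(z_W),g(z)\big)<\gamma+\gamma=2\gamma$, because $z_W,z\in W$ and $\diam(g(W))<\gamma$. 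Hence $\rho\big(h(z),g(z)\big)<\varepsilon/2+2\gamma<\varepsilon$, i.e.\ $h(z)\in\mathbf{O}_\varepsilon(g(z))$, as required. The step I expect to need the most care is the simultaneous mesh and dimension control of $\mathscr{W}$: the nerve must be finite-dimensional for the uniformly $LC^*$ property to be applicable, which is exactly where finite-dimensionality of $Z$ is used, whereas the remaining bookkeeping of the constants $\gamma$ and $\eta$ is carried entirely by the two elementary triangle-inequality estimates above.
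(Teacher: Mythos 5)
Your proposal is correct and follows essentially the same route as the paper's proof (which in turn repeats Michael's \cite[Lemma 11.1]{michael:56b}): take a locally finite open cover with $g$-small members and finite-dimensional nerve, send each vertex to a nearby point of $S$, extend over the nerve using the uniformly $LC^*$ property, and compose with a canonical map. The only differences are cosmetic bookkeeping of constants ($\varepsilon/2+\varepsilon/2$ in place of the paper's $2\varepsilon/3+\varepsilon/3$) and your explicit use of an order-$(n+1)$ refinement where the paper cites Dowker for the finite-dimensionality of the nerve.
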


\begin{proof}
  We repeat the proof of \cite[Lemma 11.1]{michael:56b}. Namely, let
  $\gamma(\varepsilon)=\frac14\eta\left(\frac{2\varepsilon}3\right)$,
  where $\eta(\varepsilon)\leq \varepsilon$ is as in the definition of
  uniformly $LC^*$ of $S$. Take a finite-dimensional paracompact space
  $Z$ and a continuous map
  $g:Z\to \mathbf{O}_{\gamma(\varepsilon)}(S)$. Then there exists a
  locally finite open cover $\mathscr{U}$ of $Z$ such that its nerve
  $\mathscr{N}(\mathscr{U})$ is finite-dimensional (see \cite[Theorem
  3.5]{dowker:47}) and $\diam(g(U))<\gamma(\varepsilon)$, for every
  $U\in \mathscr{U}$. Next, identifying the $0$-skeleton of
  $\mathscr{N}(\mathscr{U})$ with $\mathscr{U}$, define a (continuous)
  map $\ell:\mathscr{U}\to S$ such that
  $\ell(U)\in \mathbf{O}_{\gamma(\varepsilon)}(g(U))$, for every
  $U\in \mathscr{U}$. Hence, we also have that
  \begin{equation}
    \label{eq:Near-sel-vgg:5}
    \rho(\ell(U),g(z))<2\gamma(\varepsilon)\leq
    \frac\varepsilon3,\quad \text{for every 
      $z\in U\in \mathscr{U}$.}
  \end{equation}
  Thus, for $U,V\in \mathscr{U}$ with $U\cap V\neq \emptyset$, we get
  that
  $\rho(\ell(U),\ell(V))<4\gamma(\varepsilon)=
  \eta\left(\frac{2\varepsilon}3\right)$. Since
  $\mathscr{N}(\mathscr{U})$ is finite-dimensional and
  $\eta\left(\frac{2\varepsilon}3\right)$ is as in the definition of
  uniformly $LC^*$ of $S$, the map $\ell$ can be extended to a
  continuous map $f:|\mathscr{N}(\mathscr{U})|\to S$ such that
  ${\diam(f(|\sigma|))<\frac{2\varepsilon}3}$, for every
  $\sigma\in \mathscr{N}(\mathscr{U})$. Finally, let
  $\pi:Z\to |\mathscr{N}(\mathscr{U})|$ be a canonical map and
  $h:Z\to S$ be the composite map $h=f\circ \pi$.  If $z\in Z$, then
  $z\in \bigcap \sigma$ for some $\sigma\in
  \mathscr{N}(\mathscr{U})$. Take any $U\in \sigma\subset
  |\sigma|$. Since $\pi(z),U\in |\sigma|$ and $f(U)=\ell(U)$, it
  follows from \eqref{eq:Near-sel-vgg:5} that
  \[
    \rho(h(z),g(z))\leq \rho(f(\pi(z)),f(U))+ \rho(\ell(U),g(z))
                     <\frac{2\varepsilon}3 +\frac{\varepsilon}3
                     =\varepsilon.\qedhere 
  \]
\end{proof}

We may now apply Theorem~\ref{theorem-Finite-C-v5:1} to get the
following further consequence.

\begin{corollary}
  \label{corollary-Near-sel-vgg:2}
  Let $(Y,\rho)$ be a metric space which is a uniform ANR, $X$ be a
  paracompact finite $C$-space and $\varphi:X\sto Y$ be a
  $\rho$-continuous mapping such that each $\varphi(x)$, $x\in X$, is
  both $C^\omega$ and uniformly $LC^*$. Then $\varphi$ has a
  continuous $\varepsilon$-selection, for every continuous function
  $\varepsilon:X\to (0,+\infty)$.
\end{corollary}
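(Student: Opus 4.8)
The plan is to reduce everything to Theorem~\ref{theorem-Finite-C-v5:1}: under the present hypotheses I will show that each value $\varphi(x)$ is uniformly $UV^\omega$, and then the conclusion follows immediately. So fix a point $x\in X$, write $S=\varphi(x)$, and fix $\varepsilon>0$. I want to produce $\delta\in(0,\varepsilon]$ such that every continuous map $g\colon|\Sigma|\to\mathbf{O}_\delta(S)$, with $\Sigma$ a finite-dimensional simplicial complex, is null-homotopic in $\mathbf{O}_\varepsilon(S)$. The idea is to split such a null-homotopy into two stages: first push $g$ to a nearby map valued in $S$ itself, and then contract that map inside $S$.

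To choose the constants I would work backwards. Since $(Y,\rho)$ is a uniform ANR, there is a number $\gamma>0$ (corresponding to $\varepsilon/2$) such that any two $\gamma$-close continuous maps from an arbitrary space into $Y$ are $(\varepsilon/2)$-homotopic, i.e.\ joined by a homotopy all of whose tracks have diameter less than $\varepsilon/2$; this is the same fact from \cite[Theorem 6.8.6]{MR3099433} used in the proof of Corollary~\ref{corollary-Near-sel-vgg:1}. Because $S$ is uniformly $LC^*$, Proposition~\ref{proposition-Near-sel-vgg:1} (applied with the value $\gamma$ in the role of its ``$\varepsilon$'') furnishes $\theta>0$ such that every continuous map $g$ from a finite-dimensional paracompact space into $\mathbf{O}_{\theta}(S)$ is accompanied by a continuous map $h$ into $S$ with $\rho(h(z),g(z))<\gamma$ for all $z$ in the domain. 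I then set $\delta=\min\{\varepsilon/2,\theta\}$, so that in particular $\delta\leq\varepsilon$.

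Now take any finite-dimensional simplicial complex $\Sigma$ and a continuous map $g\colon|\Sigma|\to\mathbf{O}_\delta(S)$. The realisation $|\Sigma|$ is a finite-dimensional paracompact space, so since $\delta\leq\theta$, Proposition~\ref{proposition-Near-sel-vgg:1} yields a continuous $h\colon|\Sigma|\to S$ with $\rho(h(z),g(z))<\gamma$ for every $z\in|\Sigma|$. As $g$ and $h$ are $\gamma$-close, the choice of $\gamma$ gives an $(\varepsilon/2)$-homotopy $H$ from $g$ to $h$. Each track of $H$ begins at a point $g(z)$ with $\rho(g(z),S)<\delta\leq\varepsilon/2$ and has diameter less than $\varepsilon/2$, whence every value $H(z,t)$ satisfies $\rho(H(z,t),S)<\varepsilon$; thus $H$ takes its values in $\mathbf{O}_{\varepsilon}(S)$. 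On the other hand, $h$ maps the realisation of a simplicial complex into the $C^\omega$ space $S$, so by Proposition~\ref{proposition-Finite-C-v2:1} it is null-homotopic inside $S\subset\mathbf{O}_\varepsilon(S)$. Concatenating the two homotopies shows that $g$ is null-homotopic in $\mathbf{O}_\varepsilon(S)$; hence $\mathbf{O}_\delta(S)\lembed{\omega}\mathbf{O}_\varepsilon(S)$, and $S=\varphi(x)$ is uniformly $UV^\omega$. Since $x$ was arbitrary, Theorem~\ref{theorem-Finite-C-v5:1} applies and delivers the desired $\varepsilon$-selection.

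The only genuinely delicate point is the bookkeeping of the metric estimates so that the gluing homotopy never leaves $\mathbf{O}_\varepsilon(S)$: this is what forces the $\varepsilon/2$ split between the ``radius'' $\delta$ of the domain neighbourhood and the ``size'' $\varepsilon/2$ of the ANR homotopy tracks, and it is also the reason the uniform ANR hypothesis on $Y$ is needed in addition to the $LC^*$ and $C^\omega$ hypotheses on the values. Indeed, Proposition~\ref{proposition-Near-sel-vgg:1} only produces a \emph{nearby} map $h$ into $S$, not a homotopy to it; the uniform ANR property is precisely what supplies the small homotopy between the $\gamma$-close maps $g$ and $h$, allowing the approximation coming from $LC^*$ to be glued to the $C^\omega$-contraction. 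One should also record that $|\Sigma|$ is finite-dimensional and paracompact, so that Proposition~\ref{proposition-Near-sel-vgg:1} applies, but this is routine for realisations of finite-dimensional complexes.
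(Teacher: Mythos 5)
Your proposal is correct and follows essentially the same route as the paper: reduce to Theorem~\ref{theorem-Finite-C-v5:1} by verifying that each value is uniformly $UV^\omega$, using Proposition~\ref{proposition-Near-sel-vgg:1} to approximate $g$ by a map $h$ into $\varphi(x)$, the uniform ANR property of $Y$ to homotope $g$ to $h$ inside $\mathbf{O}_\varepsilon(\varphi(x))$, and Proposition~\ref{proposition-Finite-C-v2:1} to contract $h$ in the aspherical set $\varphi(x)$. The only (immaterial) difference is your $\varepsilon/2$ split, which comes from measuring the homotopy tracks from the $g$-end; the paper takes $\delta(\varepsilon)=\gamma(\eta(\varepsilon))$ and measures from the $h$-end, which lies in $\varphi(x)$, so no split is needed.
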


\begin{proof}
  According to Theorem~\ref{theorem-Finite-C-v5:1}, it suffices to
  show that each $\varphi(x)$, $x\in X$, is uniformly $UV^\omega$. To
  this end, take a point $p\in X$, and let $\gamma(\varepsilon)$ be as
  in Proposition~\ref{proposition-Near-sel-vgg:1}. Since $(Y,\rho)$ is
  a uniform ANR, as in the proof of
  Corollary~\ref{corollary-Near-sel-vgg:1}, to each $\varepsilon>0$
  there exists $\eta(\varepsilon)>0$ such that any two
  $\eta(\varepsilon)$-close continuous maps from an arbitrary space to
  $Y$ are $\varepsilon$-homotopic. Finally, take
  $\delta(\varepsilon)= \gamma(\eta(\varepsilon))$, and let us show
  that this works. So, let
  $g:|\Sigma|\to \mathbf{O}_{\delta(\varepsilon)}(\varphi(p))$ be a
  continuous map for some finite-dimensional simplicial complex
  $\Sigma$. Then by Proposition~\ref{proposition-Near-sel-vgg:1},
  there exists a continuous map ${h:|\Sigma|\to \varphi(p)}$ such that
  $\rho(h(z),g(z))<\eta(\varepsilon)$, for all $z\in
  |\Sigma|$. Accordingly, $g$ and $h$ are $\eta(\varepsilon)$-close
  and by the choice of $\eta(\varepsilon)$, the maps $g$ and $h$ are
  $\varepsilon$-homotopic.  On the other hand, by
  Proposition~\ref{proposition-Finite-C-v2:1},
  $h:|\Sigma|\to \varphi(p)$ is null-homotopic in $\varphi(p)$ because
  $\varphi(p)$ is aspherical. This implies that $g$ is null-homotopic
  in $\mathbf{O}_{\varepsilon}(\varphi(p))$, which shows that
  $\varphi(p)$ is uniformly $UV^\omega$.
\end{proof}

\end{document}